\newlength{\defbaselineskip}
\newcommand{\setlinespacing}[1]%
           {\setlength{\baselineskip}{#1 \defbaselineskip}}
\theoremstyle{plain}
\newtheorem{thm}{Theorem}[section]
\newtheorem{cor}[thm]{Corollary}
\newtheorem{lem}[thm]{Lemma}
\theoremstyle{definition}
\newtheorem{defn}[thm]{Definition}
\newtheorem{rem}[thm]{Remark}
\numberwithin{equation}{section}
\begin{document}

\newcommand{\ol }{\overline}
\newcommand{\ul }{\underline }
\newcommand{\ra }{\rightarrow }
\newcommand{\lra }{\longrightarrow }
\newcommand{\ga }{\gamma }
\newcommand{\st }{\stackrel }
\newcommand{\scr }{\scriptsize }

\title{\Large\textbf{An Outer Commutator Multiplier and Capability of Finitely Generated Abelian Groups}\footnote{This research was in part supported by a grant from IPM NO.85200018}}
\author{\textbf{Mohsen Parvizi}\footnote{Corresponding author}\\ Department of Pure Mathematics,\\ Damghan University of Basic
Sciences,\\ P.O.Box 36715-364, Damghan, Iran, and \\ Institute for
Studies in Theoretical Physics\\ and Mathematics (IPM), P.O.Box 5746-19395, Tehran, Iran.\\ \textbf{Behrooz Mashayekhy} \\
Department of Mathematics,\\ Ferdowsi University of Mashhad,\\
P.O.Box 1159-91775, Mashhad, Iran.\\
 E-mail: parvizi@dubs.ac.ir \&
mashaf@math.um.ac.ir}
\date{ }
\maketitle
\begin{abstract}
We present an explicit structure for the Baer invariant of a
finitely generated abelian group with respect to the variety
$[\mathfrak{N}_{c_1},\mathfrak{N}_{c_2}]$, for all $c_2\leq c_1\leq
2c_2$. As a consequence we determine necessary and sufficient
conditions for such groups to be
$[\mathfrak{N}_{c_1},\mathfrak{N}_{c_2}]$-capable. We also show that
if $c_1\neq 1\neq c_2$, then a finitely generated abelian group is
$[\mathfrak{N}_{c_1},\mathfrak{N}_{c_2}]$-capable if and only if it
is capable. Finally we show that $\mathfrak{S}_2$-capability implies
capability but there is a finitely generated abelian group which is
capable but is not ${\mathfrak S}_2$-capable.
\end{abstract}
\textit{Key Words}: Baer invariant; Finitely generated abelian
group; Varietal capability; Outer commutator variety.\\
\textit{2000 Mathematics Subject Classification}: 20E34; 20E10; 20F12; 20F18.\\

\section{Introduction and Preliminaries}

An interesting problem connected to the notion of Baer invariants is
the computation of Baer invariants for some natural classes of
groups with respect to common varieties. The class of finitely
generated abelian groups is an appropriate candidate because of
their explicit structure theorem.

First of all Schur (1907), computed the Schur multiplier of a finite
abelian group. The second author in a joint paper Mashayekhy and
Moghaddam (1997), computed the $\mathfrak{N}_c$-multiplier of
finitely generated abelian groups, where $\mathfrak{N}_c$ is the
variety of all nilpotent groups of class at most $c$. The authors in
2006 (Mashayekhy and Parvizi, 2006), computed the polynilpotent
multipliers of finitely generated abelian groups.

Another interesting problem is determining capable groups or more
generally varietal capable groups. In 1938 Baer classified all
capable groups among the direct sums of cyclic groups and in
particular among the finitely generated abelian groups. Burns and
Ellis  (1998), extended the result for $\mathfrak{N}_c$-capability
and recently the authors in a joint paper (Parvizi, et al.) with S.
Kayvanfar classified all finitely generated abelian groups that are
polynilpotent capable. Some work has been done in other classes of
groups for example Magidin (2005), worked on capability of the
nilpotent product of cyclic groups.

We note that one reason for studying Baer invariants and varietal
capability is their relevance to the isologism theory of P. Hall
which is used to classify groups such as prime-power groups into a
suitable equivalence classes coarser than isomorphism. The article
of Leedham-Green and Mckay (1976), gives a fairly comprehensive
account of these relationships.

In this paper we compute the multiplier of finitely generated
abelian groups and determine all varietal capable finitely generated
abelian groups with respect to the variety
$[\mathfrak{N}_{c_1},\mathfrak{N}_{c_2}],$ for all $c_2\leq c_1\leq
2c_2.$

In particular we show that:

$a)$ if $c_1\neq 1\neq c_2$, then a finitely generated abelian group
is $[\mathfrak{N}_{c_1},\mathfrak{N}_{c_2}]$-capable if and only if
it is capable;

$b)$ every $\mathcal{S}_2$-capable group is a capable group and
there is a finitely generated abelian group which is capable but is
not $\mathcal{S}_2$-capable.

In the following there are some preliminaries which are needed.

\begin{defn}
Let $G$ be any group with a free presentation $G\cong F/R$, where
$F$ is a free group. Then, after Baer (1945), the \textit{Baer
invariant} of $G$ with respect to a variety of groups
$\mathfrak{V}$, denoted by $\mathfrak{V}M(G)$, is defined to be
$$\mathfrak{V}M(G)=\frac{R\cap V(F)}{[RV^*F]}\ ,$$
where $V$ is the set of words of the variety $\mathfrak{V}$, $V(F)$
is the verbal subgroup of $F$ with respect to $\mathfrak{V}$ and
\begin{eqnarray*}
\lefteqn{[RV^*F] = \Bigl\langle
v(f_1,\ldots,f_{i-1},f_ir,f_{i+1},\ldots,f_n)
v(f_1,\ldots,f_i,\ldots,f_n)^{-1} \,\Bigm|}\hspace{1.5truein}\\
&&r\in R, v\in V, f_i\in F\mbox{\ for all\ }1\leq i\leq n,
n\in\mathbf{N}\Bigr\rangle.
\end{eqnarray*}

As a special case, if $\mathfrak{V}$ is the variety of abelian
groups, $\mathfrak{A}$, the Baer invariant of $G$ is the well-known
\textit{Schur multiplier}
$$\frac{R\cap F'}{[R,F]}.$$

If $\mathfrak{N}_c$ is the variety of nilpotent groups of class at
most $c\geq1$, then the Baer invariant of $G$ with respect to it, is
called the \textit{$c$-nilpotent multiplier} of $G$, is given by:
$$\mathfrak{N}_cM(G)=\frac{R\cap \gamma_{c+1}(F)}{[R,\ _cF]},$$
where $\gamma_{c+1}(F)$ is the $(c+1)$-st term of the lower central
series of $F$ and $[R,\ _1F]=[R,F], [R,\ _cF]=[[R,\ _{c-1}F],F]$,
inductively.
\end{defn}

\begin{lem}
(Hulse and Lennox 1976)\textit{ If $u$ and $w$ are any two words and
$v=[u,w]$ and $K$ is a normal subgroup of a group $G$, then}
$$ [Kv^*G]=[[Ku^*G],w(G)][u(G),[Kw^*G]].$$
\end{lem}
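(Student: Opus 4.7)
The plan is to argue directly from the defining generators of $[Kv^*G]$ together with the single commutator identity $[xb,c][b,c]^{-1}=[x,c]^b$. Writing $v=[u,w]$ as a word in the disjoint union of the $u$-variables $x_1,\ldots,x_m$ and the $w$-variables $y_1,\ldots,y_n$, a typical generator of $[Kv^*G]$ arising from a perturbation $g_i\mapsto g_ik$ in a $u$-slot has the form
\[
\bigl[u(g_1,\ldots,g_ik,\ldots,g_m),w(h_1,\ldots,h_n)\bigr]\bigl[u(g_1,\ldots,g_m),w(h_1,\ldots,h_n)\bigr]^{-1},
\]
and the symmetric shape when the perturbation occurs in a $w$-slot. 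A preliminary step I would record first is that $u(G)$, $w(G)$, $[Ku^*G]$, $[Kw^*G]$ and $[Kv^*G]$ are all normal in $G$: the verbal subgroups are normal for free, while the three subgroups of $[\,\cdot\,V^*G]$-type are normal because $K$ is normal in $G$ and the prescribed generators transform into generators of the same shape under $G$-conjugation (this uses that $gu(g_1,\ldots,g_n)g^{-1}=u(gg_1g^{-1},\ldots,gg_ng^{-1})$).

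For the inclusion $[Kv^*G]\subseteq[[Ku^*G],w(G)][u(G),[Kw^*G]]$, I would set $a=u(g_1,\ldots,g_ik,\ldots,g_m)$, $b=u(g_1,\ldots,g_m)$ and $c=w(h_1,\ldots,h_n)$, and observe that $x:=ab^{-1}$ is literally a defining generator of $[Ku^*G]$. The identity converts the displayed generator into $[x,c]^b$, and since $b\in u(G)\subseteq G$ normalises the $G$-normal subgroup $[[Ku^*G],w(G)]$, this element lies in $[[Ku^*G],w(G)]$. A symmetric computation handles the $w$-slot case and deposits the generator in $[u(G),[Kw^*G]]$.

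The reverse inclusion uses the same identity backwards. The computation above already shows that for every defining generator $x$ of $[Ku^*G]$ and every $c=w(h_1,\ldots,h_n)\in w(G)$, the conjugate $[x,c]^b$ with $b=u(g_1,\ldots,g_m)\in u(G)$ lies in $[Kv^*G]$; normality of $[Kv^*G]$ in $G$ then removes the conjugating $b$, giving $[x,c]\in[Kv^*G]$. To promote this from defining generators of $[Ku^*G]$ and $w(G)$ to all of $[[Ku^*G],w(G)]$, I would use the standard identities $[x_1x_2,c]=[x_1,c]^{x_2}[x_2,c]$ and $[x,c_1c_2]=[x,c_2][x,c_1]^{c_2}$, absorbing the conjugations again via normality of $[Kv^*G]$. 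The containment $[u(G),[Kw^*G]]\subseteq[Kv^*G]$ follows symmetrically.

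The main obstacle I foresee is not any single computation but the multivariable bookkeeping: $v$ has $m+n$ arguments, so one must separate cleanly the $u$-slot perturbations from the $w$-slot perturbations, verify that the correction $ab^{-1}$ is literally one of the prescribed generators of $[Ku^*G]$ rather than merely an element of it, and check that each appeal to normality is legitimate. Once the normality of the five subgroups above and the single identity $[xb,c][b,c]^{-1}=[x,c]^b$ are in hand, the proof reduces to two applications of that identity, one in each direction.
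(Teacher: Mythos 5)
Your argument is correct, but it is worth noting that the paper does not actually prove this lemma at all: its ``proof'' is a one-line citation (``See Hall and Senior (1964, Lemma 2.9)'', which moreover looks like a misdirected reference --- the result is Hulse and Lennox's). So your self-contained derivation is genuinely different in kind. The two key points you isolate are exactly the right ones: (1) the five subgroups involved are normal in $G$, because $K\trianglelefteq G$ and conjugation permutes the defining generators of each $[\,\cdot\,V^*G]$-type subgroup (via $u(g_1,\dots,g_m)^g=u(g_1^g,\dots,g_m^g)$); and (2) since the variables of $u$ and $w$ are independent, every defining generator of $[Kv^*G]$ perturbs either a $u$-slot or a $w$-slot, and the identity $[xb,c]=[x,c]^b[b,c]$ (with its mirror $[b,yc]=[b,c][b,y]^c$ for the $w$-slot case) rewrites that generator as a conjugate of $[x,c]$ with $x$ a defining generator of $[Ku^*G]$ (respectively of $[b,y]$ with $y$ a defining generator of $[Kw^*G]$). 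Normality then absorbs the conjugations in both directions, and the passage from generators to the full commutator subgroups $[[Ku^*G],w(G)]$ and $[u(G),[Kw^*G]]$ is the standard expansion you cite (equivalently: two normal subgroups commute modulo a normal subgroup as soon as their generators do). What your route buys is a complete, checkable proof from the paper's own definition of $[KV^*G]$; what the paper's route buys is brevity and an anchor in the literature where the statement is proved for arbitrary outer commutator words.
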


\begin{proof}
See Hall and Senior (1964, Lemma 2.9).
\end{proof}

 Now, using the above lemma, then
 the Baer invariant of a group $G$ with respect to the outer commutator variety $[\mathfrak{N}_{c_1},\mathfrak{N}_{c_2}]$, is as follows:
 $$[\mathfrak{N}_{c_1},\mathfrak{N}_{c_2}]M(G)\cong \frac{R\cap
 [\ga_{c_1+1}(F),\ga_{c_2+1}(F)]}{[R,\ _{c_1}F,\ga_{c_2+1}(F)][R,\ _{c_2}F,\ga_{c_1+1}(F)]}.$$
\begin{defn}\label{basic}
Let $X$ be an independent subset of a free group, and select an
arbitrary total order for $X$. We define the basic commutators on
$X$, their weight \textit{wt}, and the ordering among them as
follows:

(1) \ The elements of $X$ are basic commutators of weight one,
ordered according to the total order previously chosen.

(2) \ Having defined the basic commutators of weight less than $n$,
the basic commutators of weight $n$ are the $c_k=[c_i,c_j]$, where:

(a) \ $c_i$ and $c_j$ are basic commutators and $wt(c_i)+wt(c_j)=n$,
and

(b) \ $c_i>c_j$, and if $c_i=[c_s,c_t]$ then $c_j\geq c_t$.

(3) \ The basic commutators of weight $n$ follow those of weight
less than $n$. The basic commutators of weight $n$ are ordered among
themselves lexicographically; that is, if $[b_1,a_1]$ and
$[b_2,a_2]$ are basic commutators of weight $n$, then $[b_1,a_1]\leq
[b_2,a_2]$ if and only if $b_1<b_2$ or $b_1=b_2$ and $a_1<a_2$.
\end{defn}

The next two theorems are vital in our investigation.
\begin{thm}\label{Hall}
(Hall, 1959). Let $F=\langle x_1,x_2,\ldots ,x_d\rangle $ be a free
group, then
$$ \frac {\ga_n(F)}{\ga_{n+i}(F)} \ \ , \ \ \ \  1\leq i\leq n$$
is the free abelian group freely generated by the basic commutators
of weights $n,n+1,\ldots ,n+i-1$ on the letters $\{x_1,\ldots
,x_d\}.$
\end{thm}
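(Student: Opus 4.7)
The plan is to prove the theorem in two phases: first that the basic commutators of weights $n, n+1, \ldots, n+i-1$ generate $\ga_n(F)/\ga_{n+i}(F)$ as an abelian group, and second that they are $\mathbf{Z}$-linearly independent.

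For generation I would proceed by induction on $i$. Note first that the hypothesis $i \leq n$ forces $\ga_n(F)/\ga_{n+i}(F)$ to be abelian, since $[\ga_n(F), \ga_n(F)] \subseteq \ga_{2n}(F) \subseteq \ga_{n+i}(F)$. The base case $i=1$ reduces to showing that $\ga_n(F)/\ga_{n+1}(F)$ is generated by weight-$n$ basic commutators, which is the content of Hall's collection process: the standard commutator identities
$$[a, bc] \equiv [a,b][a,c] \pmod{\ga_{k+1}(F)}, \quad [[a,b],c]\equiv [[a,c],b]\,[a,[b,c]] \pmod{\ga_{k+1}(F)},$$
valid whenever the total weight of the arguments is at least $k$, permit the rewriting of every weight-$n$ iterated commutator in the $x_j$ as a product of basic commutators of weight $n$ modulo $\ga_{n+1}(F)$. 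The induction then telescopes this base case down through the layers of $\ga_n(F)/\ga_{n+i}(F)$: each quotient $\ga_{n+j}(F)/\ga_{n+j+1}(F)$ is generated by weight-$(n+j)$ basic commutators by the base case applied at level $n+j$.

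For $\mathbf{Z}$-linear independence I would invoke the Magnus embedding of $F$ into the group of units of the non-commutative formal power series ring $\Lambda = \mathbf{Z}\langle\langle X_1, \ldots, X_d\rangle\rangle$ via $x_j \mapsto 1 + X_j$. A direct computation shows $\ga_m(F) \subseteq 1 + I^m$, where $I$ is the augmentation ideal, and a basic commutator $c$ of weight $m$ is sent to $1 + \ul{c} + (\text{terms in } I^{m+1})$, with $\ul{c}$ the corresponding iterated Lie bracket in the $X_j$. Reducing modulo $I^{n+i}$ converts any putative nontrivial $\mathbf{Z}$-linear relation among the basic commutators of weights $n, \ldots, n+i-1$ in $\ga_n(F)/\ga_{n+i}(F)$ into a $\mathbf{Z}$-linear relation among the corresponding Hall basis elements of the free Lie ring on $X_1, \ldots, X_d$ in those degrees, contradicting the Hall basis theorem for free Lie algebras.

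The main obstacle is this independence step. Generation is an essentially mechanical consequence of the collection process, but ruling out unexpected relations requires substantive external input. The Magnus-embedding route is the most economical because it reduces the question to a Lie-algebraic statement handled by an independent combinatorial argument on Hall sequences; an alternative would be to realise $F/\ga_{n+i}(F)$ as a free nilpotent group built by successive central extensions and to check at each stage that the natural map from the free abelian group on the new basic commutators into the centre is injective, but this too ultimately rests on the same Lie-theoretic core.
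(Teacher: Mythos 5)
Your sketch is correct, but note that the paper offers no proof of this theorem at all: it is quoted from Hall (1959) with a bare citation, and what you have written is precisely the classical argument from that source (and from Magnus--Karrass--Solitar) --- the collection process for generation, and the Magnus embedding $x_j\mapsto 1+X_j$ into $\mathbf{Z}\langle\langle X_1,\ldots,X_d\rangle\rangle$ combined with the basis theorem for the free Lie ring for independence. The one step worth spelling out in the independence half is that when a putative relation $\prod c_j^{e_j}\in\gamma_{n+i}(F)$ is expanded in the power series ring, the cross terms $\underline{c_j}\,\underline{c_k}$ have degree at least $2n\geq n+i$ and so vanish modulo $I^{n+i}$; this is the second place (besides the abelianness of $\gamma_n(F)/\gamma_{n+i}(F)$) where the hypothesis $i\leq n$ is genuinely used, and it is what guarantees that the relation descends to a graded $\mathbf{Z}$-linear relation among Hall basis elements of the free Lie ring.
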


\begin{thm}
(Witt Formula). The number of basic commutators of weight $n$ on $d$
generators is given by the following formula:
$$ \chi_n(d)=\frac {1}{n} \sum_{m|n}^{} \mu (m)d^{n/m},$$
where $\mu (m)$ is the M\"{o}bius function, which is defined to be
   \[ \mu (m)=\left \{ \begin{array}{ll}
      1 & \textrm{if}\ \ m=1, \\ 0 &  \textrm{if} \ \ m=p_1^{\alpha_1}\ldots
p_k^{\alpha_k}\ \ \exists \alpha_i>1, \\ (-1)^s & \textrm{if} \ \
m=p_1\ldots p_s,
\end{array} \right.  \]
where the $p_i$, $1\leq i\leq k$, are the distinct primes dividing
$m$.
\end{thm}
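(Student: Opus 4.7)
The plan is to prove Witt's formula by passing to the free Lie algebra on $d$ generators and using the Poincaré--Birkhoff--Witt theorem, then recovering the stated identity via Möbius inversion. By Theorem \ref{Hall}, the quotient $\ga_n(F)/\ga_{n+1}(F)$ is a free abelian group on the basic commutators of weight $n$, so $\chi_n(d)$ is exactly the rank of the $n$-th graded piece of the graded Lie ring $\mathrm{gr}(F)=\bigoplus_n \ga_n(F)/\ga_{n+1}(F)$. The key structural input is that $\mathrm{gr}(F)\otimes\mathbf{Q}$ is the free Lie algebra $L$ on $d$ generators; hence the number I want to count equals $\dim_{\mathbf{Q}} L_n$.

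First I would invoke PBW: the universal enveloping algebra $U(L)$ is the free associative $\mathbf{Q}$-algebra on $d$ generators, so its Hilbert series in $t$ (giving weight one to each generator) is $\sum_{n\geq 0} d^n t^n = 1/(1-dt)$. On the other hand, choosing an ordered homogeneous basis of $L$, PBW gives a vector-space basis of $U(L)$ consisting of ordered monomials, which yields the product decomposition
$$\frac{1}{1-dt}=\prod_{n\geq 1}\frac{1}{(1-t^{n})^{\chi_{n}(d)}}.$$

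Next I would take $\log$ of both sides and expand:
$$\sum_{k\geq 1}\frac{(dt)^k}{k}=\sum_{n\geq 1}\chi_n(d)\sum_{j\geq 1}\frac{t^{nj}}{j}.$$
Comparing the coefficient of $t^N$ on each side gives $d^N/N = \sum_{n\mid N}\chi_n(d)\,(n/N)$, or equivalently
$$d^{N}=\sum_{n\mid N} n\,\chi_{n}(d).$$
Applying the classical Möbius inversion formula to this divisor sum produces $n\,\chi_n(d)=\sum_{m\mid n}\mu(m)d^{n/m}$, which is exactly the stated identity after dividing by $n$.

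The main obstacle is justifying that $\mathrm{gr}(F)\otimes\mathbf{Q}$ is free as a Lie algebra and, equivalently, that the basic commutators of weight $n$ (as defined in Definition \ref{basic}) descend to a $\mathbf{Q}$-basis of $L_n$; this is the content of Hall's collection process together with the Magnus embedding $F\hookrightarrow \mathbf{Q}\langle\!\langle x_1,\ldots,x_d\rangle\!\rangle$. Once that identification is in hand, PBW and Möbius inversion are mechanical. An alternative, more combinatorial route avoiding PBW would be to biject basic commutators of weight $n$ with Lyndon words of length $n$ on a $d$-letter alphabet and count the latter by the standard necklace argument, which directly yields $n\chi_n(d)=\sum_{m\mid n}\mu(m)d^{n/m}$.
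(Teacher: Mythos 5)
The paper does not actually prove this theorem: its ``proof'' is the citation ``See Hall (1959)'', so your argument has to be judged on its own merits rather than against an in-house proof. Your outline is correct and is the standard modern argument. PBW applied to the free Lie algebra $L$ on $d$ generators over $\mathbb{Q}$ gives the Hilbert-series factorization $\frac{1}{1-dt}=\prod_{n\geq 1}(1-t^{n})^{-\dim L_{n}}$, taking logarithms and comparing coefficients yields $d^{N}=\sum_{n\mid N}n\dim L_{n}$, and M\"obius inversion finishes; your coefficient computation is right. The one genuine debt, which you correctly flag yourself, is the identification of $\dim_{\mathbb{Q}}L_{n}$ with the number of basic commutators of weight $n$ in the sense of Definition~\ref{basic}: this requires the Magnus--Witt theorem that $\bigoplus_{n}\gamma_{n}(F)/\gamma_{n+1}(F)$ is the free Lie ring on $d$ generators, so that Theorem~\ref{Hall} transfers the count of basic commutators to $L_{n}$. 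That input is of comparable depth to the Witt formula itself and is proved in the same chapter of Hall's book, so the argument is not circular but it is not self-contained either. By contrast, the proof in the source the paper defers to stays entirely inside the free group: the collection process shows directly that the $d^{n}$ words of length $n$ are accounted for by basic commutators, giving the same divisor identity $d^{n}=\sum_{m\mid n}m\chi_{m}(d)$ without any Lie-algebra machinery, followed by the same M\"obius inversion. Your Lyndon-word alternative via necklace counting is likewise sound and arguably the most elementary route. In short: correct, honest about its one imported structural fact, and different in machinery (enveloping algebras versus the collection process) from the reference the paper cites.
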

\begin{proof}
See Hall (1959).
\end{proof}
The following definition will be used several times in this article.

\begin{defn}\label{cap}
Let $\mathfrak{V}$ be any variety of groups defined by a set of laws
$V$, and $G$ be any group. Extending the terminology of Hall and
Senior (1964), $G$ is called ${\cal V}$-capable if there exists a
group $E$ which satisfies $G\cong E/V^*(E)$, where $V^*(E)$ is the
marginal subgroup of $E$ with respect to $\mathfrak{V}$. (See also
Moghaddam and Kayvanfar, 1997, for the definition of
$\mathfrak{V}$-capability and Burns and Ellis, 1998, for
$\mathfrak{N}_c$-capability.)
\end{defn}

According to Definition~\ref{cap} capable groups are
$\mathfrak{A}$-capable groups, where $\mathfrak{A}$ is the variety
of abelian groups.

The following definition and theorem are taken from Moghaddam and
Kayvanfar (1997), and contains a necessary and sufficient condition
for a group to be $\mathfrak{V}$-capable.

\begin{defn}
Let $\mathfrak{V}$ be any variety and $G$ be any group. Define
$V^{**}(G)$ as follows:
$$V^{**}(G)=\cap \{\psi(V^*(E))\ | \ \psi:E \st{onto}\lra G \ , \ ker\psi\subseteq V^*(E)\}.$$
Note that if $\mathfrak{V}$ is the variety of abelian groups, then
the above notion which has been first studied in Beyl., et al.
(1979), is denoted by $Z^*(G)$ and called epicenter in Burns and
Ellis (1998). Also the above notion has been studied in Burns and
Ellis (1998), for the variety $\mathfrak{N}_c$.
\end{defn}

\begin{thm}
With the above notations and assumptions  $G/V^{**}(G)$ is the
largest quotient of $G$ which is $\mathfrak{V}$-capable, and hence
$G$ is $\mathfrak{V}$-capable if and only if $V^{**}(G)=1$.
\end{thm}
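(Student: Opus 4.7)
The plan is to prove the theorem in two halves. First I would show that any $\mathfrak{V}$-capable quotient $G/N$ of $G$ already satisfies $V^{**}(G)\subseteq N$, so $G/V^{**}(G)$ dominates the lattice of $\mathfrak{V}$-capable quotients. Second I would construct an explicit group $E$ with $E/V^*(E)\cong G/V^{**}(G)$, showing $G/V^{**}(G)$ is itself $\mathfrak{V}$-capable. Specialising the first half to $N=1$ and combining the two halves yields the stated equivalence $G$ is $\mathfrak{V}$-capable iff $V^{**}(G)=1$.

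For the upper bound, suppose $G/N\cong E/V^*(E)$ and let $\phi:E\ra G/N$ be the induced surjection with $\ker\phi=V^*(E)$. Let $\pi:G\ra G/N$ denote the canonical projection and form the pullback
\[
P=\{(g,e)\in G\times E\mid \pi(g)=\phi(e)\},
\]
with coordinate projections $\alpha:P\ra G$ and $\beta:P\ra E$, both of which are surjective. The routine identity $V^*(G\times E)=V^*(G)\times V^*(E)$ ensures $\ker\alpha=\{1\}\times V^*(E)\subseteq V^*(G\times E)\cap P\subseteq V^*(P)$, so $(P,\alpha)$ is admissible in the definition of $V^{**}(G)$. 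Conversely, given $(g,e)\in V^*(P)$, the surjectivity of $\alpha$ and $\beta$ lets one project the marginal identity in $P$ coordinatewise and deduce $g\in V^*(G)$ and $e\in V^*(E)$; then $\phi(e)=1$ forces $\pi(g)=1$, hence $g\in N$. Thus $V^{**}(G)\subseteq\alpha(V^*(P))\subseteq N$, as required.

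For the realisation half, let $\{(E_\lambda,\psi_\lambda)\}_{\lambda\in\Lambda}$ index all admissible pairs with $\psi_\lambda:E_\lambda\ra G$ surjective and $\ker\psi_\lambda\subseteq V^*(E_\lambda)$, and form the fibre product
\[
P=\Bigl\{(e_\lambda)_\lambda\in\prod_{\lambda}E_\lambda\ \Bigm|\ \psi_\lambda(e_\lambda)=\psi_\mu(e_\mu)\text{ for all }\lambda,\mu\Bigr\},
\]
with induced surjection $\Psi:P\ra G$, $\Psi((e_\lambda))=\psi_\lambda(e_\lambda)$. The crux is to establish
\[
V^*(P)=P\cap\prod_\lambda V^*(E_\lambda).
\]
The containment $\supseteq$ follows from $V^*(\prod E_\lambda)=\prod V^*(E_\lambda)$ combined with passing to the subgroup $P$, while $\subseteq$ uses that each projection $P\ra E_\lambda$ is surjective (because $\psi_\lambda$ is) to transport a marginal identity on $P$ coordinatewise into each $E_\lambda$. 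Once this identity is in hand one reads off $\Psi(V^*(P))=\bigcap_\lambda\psi_\lambda(V^*(E_\lambda))=V^{**}(G)$ and $\ker\Psi=\prod_\lambda\ker\psi_\lambda\subseteq V^*(P)$, so $\Psi^{-1}(V^{**}(G))=V^*(P)$ and $\Psi$ descends to an isomorphism $P/V^*(P)\cong G/V^{**}(G)$, proving $\mathfrak{V}$-capability.

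The main technical obstacle is the equality $V^*(P)=P\cap\prod V^*(E_\lambda)$ in the realisation step. For an arbitrary subgroup $H\le K$ the marginal subgroup $V^*(H)$ can strictly contain $H\cap V^*(K)$, so the reverse inclusion is not automatic; it is precisely here that the surjectivity of the coordinate projections, together with the elementwise nature of the laws defining $\mathfrak{V}$, is used. Once this step is secured the remainder of the argument is bookkeeping with the pullback and the definition of $V^{**}$.
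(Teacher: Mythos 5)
Your argument is correct, and it is essentially the standard proof: the paper itself gives no proof of this theorem, citing it to Moghaddam and Kayvanfar (1997), and the pullback/fibre-product argument you give is exactly the one used there (and, for the variety of abelian groups, in Beyl, Felgner and Schmid (1979) for the epicenter $Z^*$). Both halves check out: the inclusions $\{1\}\times V^*(E)\subseteq V^*(G\times E)\cap P\subseteq V^*(P)$ and the transport of marginality along the surjections $\alpha,\beta$ are valid, and your key identity $V^*(P)=P\cap\prod_\lambda V^*(E_\lambda)$ holds for the reasons you indicate. The only point worth tightening is set-theoretic: the collection of all admissible pairs $(E_\lambda,\psi_\lambda)$ is a proper class, so you should first observe that $V^{**}(G)$, being an intersection of subgroups of $G$, is already realised as the intersection over some \emph{set} of admissible pairs, and form the fibre product over that set.
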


The following theorem and its conclusion state the relationship
between $\mathfrak{V}$-capability and Baer invariants.

\begin{thm}\label{inj}
Let $\mathfrak{V}$ be any variety of groups, $G$ be any group, and
$N$ be a normal subgroup of $G$ contained in the marginal subgroup
with respect to $\mathfrak{V}$. Then the natural homomorphism
$\mathfrak{V}M(G)\longrightarrow \mathfrak{V}M(G/N)$ is injective if
and only if $N\subseteq V^{**}(G)$, where $\mathfrak{V}M(G)$ is the
Baer invariant of $G$ with respect to $\mathfrak{V}$.
\end{thm}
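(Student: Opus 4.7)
The plan is to translate both sides of the biconditional into the same internal condition on a free presentation, making it transparent that they agree. Fix a free presentation $G \cong F/R$, and write $N = S/R$ with $R \subseteq S \trianglelefteq F$; then $G/N \cong F/S$ is a free presentation of $G/N$, so $\mathfrak{V}M(G/N) = (S \cap V(F))/[SV^*F]$. Directly from the defining laws of the marginal subgroup, the hypothesis $N \subseteq V^*(G)$ rewrites as $[SV^*F] \subseteq R$; this observation is used repeatedly below.

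First I would handle the injectivity side. The natural map $\mathfrak{V}M(G) \to \mathfrak{V}M(G/N)$ is induced by the inclusion $R \cap V(F) \hookrightarrow S \cap V(F)$, so its kernel is $(R \cap V(F) \cap [SV^*F])/[RV^*F]$. Since $[SV^*F]$ lies in both $V(F)$ and $R$, this kernel simplifies to $[SV^*F]/[RV^*F]$, which is trivial if and only if $[SV^*F] \subseteq [RV^*F]$.

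Next I would show that $N \subseteq V^{**}(G)$ is equivalent to the same inclusion $[SV^*F] \subseteq [RV^*F]$. For the forward implication I would apply the definition of $V^{**}(G)$ to the canonical covering $\psi_0 : F/[RV^*F] \twoheadrightarrow G$; its kernel $R/[RV^*F]$ lies in $V^*(F/[RV^*F])$ by the very definition of the latter, and a short calculation identifies $V^*(F/[RV^*F])$ with $U_0/[RV^*F]$, where $U_0 = \{f \in F : [\{f\}V^*F] \subseteq [RV^*F]\}$. Then $N \subseteq \psi_0(V^*(F/[RV^*F])) = U_0/R$ forces $S \subseteq U_0$, i.e.\ $[SV^*F] \subseteq [RV^*F]$. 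For the reverse implication, given any $\psi : E \twoheadrightarrow G$ with $\ker\psi \subseteq V^*(E)$, I would replace $F$ by a free group generated by lifts of generators of $E$ that also project onto $G$, obtaining $\pi : F \twoheadrightarrow E$ with $\psi\pi$ the presentation of $G$; setting $T = \ker\pi$, the hypothesis $R/T = \ker\psi \subseteq V^*(F/T)$ yields $[RV^*F] \subseteq T$, so $[SV^*F] \subseteq [RV^*F] \subseteq T$, giving $\pi(S) \subseteq V^*(E)$ and hence $N = \psi\pi(S) \subseteq \psi(V^*(E))$.

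The first half is essentially bookkeeping; the substantive work is in the second half. The main obstacle is the reverse implication: for an \emph{arbitrary} covering $E \twoheadrightarrow G$ one must produce a single free presentation of $G$ that simultaneously covers $E$, and verify that the commutator containment $[SV^*F] \subseteq [RV^*F]$ transfers through the change of presentation. The forward direction is comparatively routine once one commits to the canonical cover $F/[RV^*F]$ and computes its marginal subgroup explicitly.
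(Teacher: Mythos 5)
The paper itself gives no proof of this theorem; it only cites Moghaddam and Kayvanfar (1997), so your argument has to be judged on its own rather than against an internal proof. Your overall architecture is the standard one and is sound: reducing both sides of the biconditional to the single containment $[SV^*F]\subseteq [RV^*F]$ is exactly the right move. The kernel computation is correct (the hypothesis $N\subseteq V^*(G)$ gives $[SV^*F]\subseteq R$, and $[SV^*F]\subseteq V(F)$ always, so the kernel is $[SV^*F]/[RV^*F]$), and the forward implication via the canonical cover $F/[RV^*F]$, with $V^*(F/[RV^*F])=U_0/[RV^*F]$ and $\psi_0(V^*(F/[RV^*F]))=U_0/R$, is complete as written.

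The one genuine gap is the step you yourself flag in the reverse implication: after replacing $F$ by a free group $F'$ mapping onto a given cover $E$, you invoke $[S'V^*F']\subseteq [R'V^*F']$ for the \emph{new} presentation, but you only established that containment for the original one. Transferring it requires knowing that the natural homomorphism $\mathfrak{V}M(G)\longrightarrow \mathfrak{V}M(G/N)$, and hence its kernel, is independent of the chosen free presentation; this is a standard but nontrivial lifting lemma, not bookkeeping, and as written your proof does not contain it. You can close the gap either by proving that lemma or, more economically, by avoiding the change of presentation altogether: lift the fixed presentation map $F\rightarrow G$ through $\psi$ to some $\phi:F\rightarrow E$ (this exists by freeness but need not be onto). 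Then $\phi(R)\subseteq \ker\psi\subseteq V^*(E)$ forces $\phi([RV^*F])=1$; and since $E=\phi(F)\ker\psi$ with $\ker\psi$ marginal and normal, every expression $v(e_1,\ldots,e_i\phi(s),\ldots,e_n)v(e_1,\ldots,e_n)^{-1}$ with $s\in S$ reduces, after absorbing the marginal factors, to $\phi$ applied to a generator of $[SV^*F]\subseteq [RV^*F]$, hence is trivial. Thus $\phi(S)\subseteq V^*(E)$ and $N=\psi\phi(S)\subseteq\psi(V^*(E))$, which is what the reverse implication needs. With either repair your proof is complete.
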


\begin{proof}
See Moghaddam and Kayvanfar (1997).
\end{proof}

In the finite case the following theorem is easier to use than the
proceeding ones.

\begin{thm}
Let $\mathfrak{V}$ be any variety and $G$ be any group with
$V(G)=1$. If $\mathfrak{V}M(G)$ is finite, and $N$ is a normal
subgroup of $G$ such that $\mathfrak{V}M(G/N)$ is also finite, then
the natural homomorphism $\mathfrak{V}M(G)\longrightarrow
\mathfrak{V}M(G/N)$ is injective if and only if
$|\mathfrak{V}M(G/N)|=|\mathfrak{V}M(G)|$.
\end{thm}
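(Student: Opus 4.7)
The plan is to realize both Baer invariants with a common free presentation, use the hypothesis $V(G)=1$ to force the natural map to be surjective, and then let finiteness do the rest.

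First I would fix a free presentation $G\cong F/R$, and let $S$ be the preimage of $N$ in $F$, so that $R\subseteq S\subseteq F$, $N\cong S/R$, and $G/N\cong F/S$. By the defining formula,
$$\mathfrak{V}M(G)=\frac{R\cap V(F)}{[RV^*F]},\qquad \mathfrak{V}M(G/N)=\frac{S\cap V(F)}{[SV^*F]}.$$
Because $R\subseteq S$, each generator of $[RV^*F]$ is also a generator of $[SV^*F]$, so $[RV^*F]\subseteq[SV^*F]$, and the inclusion $R\cap V(F)\hookrightarrow S\cap V(F)$ descends to the natural homomorphism referred to in the statement.

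The critical step is to exploit $V(G)=1$. This translates to $V(F)R/R=1$, i.e.\ $V(F)\subseteq R\subseteq S$, whence $R\cap V(F)=V(F)=S\cap V(F)$. Therefore
$$\mathfrak{V}M(G)=\frac{V(F)}{[RV^*F]},\qquad \mathfrak{V}M(G/N)=\frac{V(F)}{[SV^*F]},$$
and the natural map becomes the canonical projection from one quotient of $V(F)$ onto a coarser quotient of $V(F)$, which is automatically \emph{surjective}.

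Given surjectivity, the equivalence is routine finite-group arithmetic. If the map is injective, then being surjective as well it is a bijection, and so $|\mathfrak{V}M(G)|=|\mathfrak{V}M(G/N)|$. Conversely, a surjection between two finite groups of equal order is forced to be injective. The only nontrivial point (and the closest thing to an obstacle) is the surjectivity claim, and that is precisely where the hypothesis $V(G)=1$ enters; the rest is bookkeeping at the level of the presentation.
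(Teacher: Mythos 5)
Your proposal is correct and follows essentially the same route as the paper: both realize the two Baer invariants as $V(F)/[RV^*F]$ and $V(F)/[SV^*F]$ over a common free presentation (using $V(G)=1$ to get $V(F)\subseteq R\subseteq S$), observe that the natural map is the surjection with kernel $[SV^*F]/[RV^*F]$, and conclude by counting. Your write-up simply makes explicit the surjectivity step that the paper leaves as ``the result easily follows.''
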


\begin{proof}
It is easy to see that with the assumption of the theorem we have
$\mathfrak{V}M(G)\cong V(F)/[RV^*F]$ and $\mathfrak{V}M(G/N)\cong
V(F)/[SV^*F]$ in which $G\cong F/R$ is a free presentation for $G$
and $N\cong S/R$. Therefore the kernel of the natural homomorphism
$\mathfrak{V}M(G)\longrightarrow \mathfrak{V}M(G/N)$ is the group
$[SV^*F]/[RV^*F]$. Considering the finiteness of $\mathfrak{V}M(G)$
and $\mathfrak{V}M(G/N)$, the result easily follows.
\end{proof}

As a useful consequence of Theorem~\ref{inj} we have:

\begin{cor}
An abelian group $G$ is $\mathfrak{V}$-capable if and only if the
natural homomorphism $\mathfrak{V}M(G)\longrightarrow
\mathfrak{V}M(G/\langle x\rangle)$ has a non-trivial kernel for all
non-identity elements $x$ in $V^*(G)$.
\end{cor}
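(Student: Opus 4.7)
The plan is to reduce the statement to a direct application of Theorem~\ref{inj}, using the abelian hypothesis to ensure that cyclic subgroups are automatically normal and hence available as test kernels. By the theorem preceding Definition 1.6 (the one stating $G/V^{**}(G)$ is the largest $\mathfrak{V}$-capable quotient), $G$ is $\mathfrak{V}$-capable if and only if $V^{**}(G)=1$. Since $V^{**}(G)$ is a subgroup of $V^*(G)$ (being an intersection of images of marginal subgroups, and contained in $V^*(G)$ by taking $\psi=\mathrm{id}_G$), the condition $V^{**}(G)=1$ is equivalent to: for every non-identity $x\in V^*(G)$, $x\notin V^{**}(G)$.

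Next I translate the containment $x\in V^{**}(G)$ into a statement about cyclic subgroups. Since $G$ is abelian, $\langle x\rangle$ is normal in $G$, and if $x\in V^*(G)$ then $\langle x\rangle\subseteq V^*(G)$. Moreover, because $V^{**}(G)$ is a subgroup, $\langle x\rangle\subseteq V^{**}(G)$ if and only if $x\in V^{**}(G)$. Theorem~\ref{inj} applied with $N=\langle x\rangle$ therefore says: the natural homomorphism $\mathfrak{V}M(G)\to\mathfrak{V}M(G/\langle x\rangle)$ is injective if and only if $x\in V^{**}(G)$. Equivalently, it has non-trivial kernel precisely when $x\notin V^{**}(G)$.

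Combining the two equivalences gives the corollary in both directions. If $G$ is $\mathfrak{V}$-capable, then for every non-identity $x\in V^*(G)$ we have $x\notin V^{**}(G)=1$, so the map has non-trivial kernel. Conversely, if the map has non-trivial kernel for every non-identity $x\in V^*(G)$, then no such $x$ lies in $V^{**}(G)$; since $V^{**}(G)\subseteq V^*(G)$, this forces $V^{**}(G)=1$, i.e.\ $G$ is $\mathfrak{V}$-capable.

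There is no real obstacle here: the argument is purely formal once Theorem~\ref{inj} is in hand. The only point that warrants a sentence of care is the reduction from arbitrary normal subgroups of $V^*(G)$ (as required in Theorem~\ref{inj}) to the specific family of cyclic subgroups $\langle x\rangle$, which is precisely where abelianness is used twice, once for normality of $\langle x\rangle$ and once implicitly through the remark that a subgroup is trivial iff it contains no non-identity cyclic subgroup.
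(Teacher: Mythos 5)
Your argument is correct and is exactly the intended derivation: the paper states this corollary without proof, merely as a consequence of Theorem~\ref{inj}, and your reduction via the characterization $G$ is $\mathfrak{V}$-capable iff $V^{**}(G)=1$, together with the containment $V^{**}(G)\subseteq V^*(G)$ and the application of Theorem~\ref{inj} to $N=\langle x\rangle$, fills in precisely the omitted details. No issues.
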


The following fact is used in the last section [Stroud, (1965),
Theorem 1.2(b)]

\begin{thm}
Let $u$ and $v$ be two words in independent variables and $w=[u,v]$.
Then, in any group $G$,\\
$(i)$ \ $w(G)=[u(G),v(G)]$\\
$(ii)$ \ if $A=C_G(u(G))$, $B=C_G(v(G))$, $L/A=v^*(G/A)$, and
$M/B=u^*(G/B)$, then $w^*(G)=L\cup M$.
\end{thm}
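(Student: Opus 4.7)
Part (i) splits into two opposite inclusions. Since $u$ and $v$ involve disjoint sets of variables, every $w$-value in $G$ has the form $w(\bar a,\bar b)=[u(\bar a),v(\bar b)]$, and therefore $w(G)\subseteq[u(G),v(G)]$. For the reverse inclusion I would use that $u(G)$ and $v(G)$ are verbal, hence fully invariant, subgroups of $G$. The subgroup $[u(G),v(G)]$ is generated by the commutators $[x,y]$ with $x\in u(G)$, $y\in v(G)$; writing such $x$ and $y$ as products of $u$-values and $v$-values (and their inverses) and repeatedly applying the identities $[x_1x_2,y]=[x_1,y]^{x_2}[x_2,y]$ and $[x,y_1y_2]=[x,y_2][x,y_1]^{y_2}$ expresses each $[x,y]$ as a product of $G$-conjugates of basic commutators $[u(\bar a),v(\bar b)]$, i.e.\ of $w$-values. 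Since $w(G)$ is fully invariant, all those conjugates again lie in $w(G)$, which yields $[u(G),v(G)]\subseteq w(G)$.

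For part (ii) I would unwind the definition of the marginal subgroup for $w=[u,v]$. Because the variables of $u$ and $v$ are disjoint, an element $g\in G$ lies in $w^*(G)$ precisely when, for all choices $\bar f$ of $u$-arguments and $\bar h$ of $v$-arguments and every admissible position $i$ or $j$, replacing $f_i$ by $gf_i$ or $h_j$ by $gh_j$ leaves $[u(\bar f),v(\bar h)]$ unchanged. The marginality condition at the $u$-positions rewrites as
\begin{equation*}
u(f_1,\ldots,gf_i,\ldots,f_m)\,u(\bar f)^{-1}\in C_G(v(G))=B
\end{equation*}
for every $\bar f$ and $i$, which by the definition of the marginal subgroup of the quotient $G/B$ is exactly $gB\in u^*(G/B)=M/B$, that is, $g\in M$. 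A symmetric argument with $A=C_G(u(G))$ and $v^*(G/A)=L/A$ shows that the marginality condition at the $v$-positions is equivalent to $g\in L$. Intersecting the two equivalences gives $w^*(G)=L\cap M$ (which is what the displayed formula in the statement must mean, since a union of two subgroups is typically not a subgroup).

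The step I expect to need the most care is the lower inclusion in (i): one has to verify cleanly that commutator-expansion of $[x,y]$ for arbitrary products $x$ and $y$ produces only $G$-conjugates of the ``basic'' commutators $[u(\bar a),v(\bar b)]$, and then invoke fully-invariance of $w(G)$ to absorb those conjugates. Part (ii), by contrast, is essentially a chain of equivalences that should go through once the correct translation of ``marginal'' across the quotient maps $G\to G/A$ and $G\to G/B$ is written down carefully; the independence of the variables of $u$ and $v$ is what makes the two kinds of insertions decouple.
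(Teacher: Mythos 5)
The paper offers no proof of this statement at all---it is quoted from Stroud (1965, Theorem 1.2(b))---so there is nothing internal to compare your argument against; judged on its own, your proof is correct and is the standard one. In part (i) both inclusions work exactly as you say: disjointness of the variables makes every $w$-value equal to some $[u(\bar a),v(\bar b)]$, and in the reverse direction the expansion identities reduce $[x,y]$ (for $x$, $y$ products of $u$-values, resp.\ $v$-values, and their inverses) to conjugates of elements $[u(\bar a),v(\bar b)]^{\pm1}$, which lie in $w(G)$ because verbal subgroups are normal and $w(\bar g)^h=w(\bar g^h)$. In part (ii) your key reduction is the right one: $[u',c]=[u,c]$ rewrites as $(u'u^{-1})^c=u'u^{-1}$, and since centralizing a generating set of $v(G)$ is the same as centralizing $v(G)$, the marginality condition at the $u$-positions is exactly $u(f_1,\dots,gf_i,\dots,f_m)u(\bar f)^{-1}\in B$ for all choices, i.e.\ $g\in M$; one small point worth recording is that $A$ and $B$ are normal (centralizers of the characteristic subgroups $u(G)$, $v(G)$), so that $G/A$, $G/B$ and hence $L$, $M$ are well defined. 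You are also right to read the conclusion as $w^*(G)=L\cap M$: the two decoupled families of conditions must hold simultaneously, and $L\cup M$ is not in general a subgroup; the $\cup$ in the statement as printed is a typo.
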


To use these results we need an explicit structure for the Baer
invariants of finitely generated abelian groups with respect to the
variety $\mathfrak{V}$ as defined. This will be done in Theorem 2.6.

\section{Computing $[\mathfrak{N}_{c_1},\mathfrak{N}_{c_2}]$-Multipliers }

Let $G \cong \mathbb{Z}^{(k)}\oplus \mathbb{Z}_{n_1}\oplus
\mathbb{Z}_{n_2}\oplus\cdots\oplus\mathbb{Z}_{n_t}$ be a finitely
generated abelian group with $n_{i+1}\mid n_i$ for all $1\leq i\leq
t-1$,  where for any group $X$, $X^{(n)}$ denotes the group $X\oplus
X\oplus \cdots \oplus X$ ($n$ copies). Let $F=F\langle
x_1,\ldots,x_k,x_{k+1},\ldots,x_{k+t}\rangle$ be the free group on
the set $\{x_1,\ldots,x_{k+t}\}$. It is easy to see that
$$1\lra R\lra F\lra G\lra 1,$$ is a free presentation for $G$ in
which $R=\prod_{i=1}^{t}R_i\ga_2(F)$, where $R_i=\langle
x_{k+i}^{n_i}\rangle$, so the Baer invariant of $G$ with respect to
$[\mathfrak{N}_{c_1},\mathfrak{N}_{c_2}]$ is

$$[\mathfrak{N}_{c_1},\mathfrak{N}_{c_2}]M(G)\cong \frac{R\cap
 [\ga_{c_1+1}(F),\ga_{c_2+1}(F)]}{[R,\ _{c_1}F,\ga_{c_2+1}(F)][R,\ _{c_2}F,\ga_{c_1+1}(F)]}.$$
Since $R\supseteq \ga_2(F)$ we have
$$[\mathfrak{N}_{c_1},\mathfrak{N}_{c_2}]M(G)\cong \frac{[\ga_{c_1+1}(F),\ga_{c_2+1}(F)]}{[R,\ _{c_1}F,\ga_{c_2+1}(F)][R,\ _{c_2}F,\ga_{c_1+1}(F)]}.$$

 In order to find the structure of $[\mathfrak{N}_{c_1},\mathfrak{N}_{c_2}]M(G)$, we
need the following notation and lemmas. Using
Definition~\ref{basic}, we define the following set when $c_1\geq
c_2$.


\begin{center}
$A$=\{$[\beta,\alpha] \ | \ \beta $ and $ \alpha $ are basic
commutators on $X$ such that $ \beta>\alpha, $ $ \ wt(\beta)=c_1+1, \ wt(\alpha)=c_2+1$ \}.\\ \ \  \\

\end{center}

\begin{lem}
If $c_1\leq 2c_2$, then every element of $A$ is a basic commutator
on~$X$.
\end{lem}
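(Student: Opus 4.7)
The plan is to verify that $[\beta,\alpha]$ satisfies the three conditions of Definition~\ref{basic}. Two of them are immediate from the description of the set $A$: the constituents $\beta$ and $\alpha$ are both basic commutators whose weights sum to $c_1+c_2+2=wt([\beta,\alpha])$, and $\beta>\alpha$ in the chosen order. So the entire content of the lemma lies in the bracket clause of (2b): if $\beta=[\beta_1,\beta_2]$ is itself the basic decomposition of $\beta$, one must show $\alpha\geq\beta_2$.

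To establish this I would compare weights. Since $c_1\geq c_2\geq1$, $\beta$ has weight $c_1+1\geq2$ and therefore admits a basic decomposition $\beta=[\beta_1,\beta_2]$ with $\beta_1,\beta_2$ basic and $\beta_1>\beta_2$. Because the total order on basic commutators places higher-weight ones strictly later, $\beta_1>\beta_2$ forces $wt(\beta_1)\geq wt(\beta_2)$; combined with $wt(\beta_1)+wt(\beta_2)=c_1+1$ this yields $wt(\beta_2)\leq(c_1+1)/2$. Invoking the hypothesis $c_1\leq2c_2$ one then obtains
$$wt(\beta_2)\leq\frac{c_1+1}{2}\leq c_2+\frac{1}{2}<c_2+1=wt(\alpha),$$
and since commutators of strictly smaller weight precede those of larger weight, this forces $\beta_2<\alpha$, hence $\alpha\geq\beta_2$, as required.

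I do not expect any real obstacle: the lemma is essentially an arithmetic check translating the class inequality $c_1\leq2c_2$ into the weight inequality demanded by clause (2b) of the basic-commutator definition. The only minor point to keep in mind is the existence of a basic decomposition $\beta=[\beta_1,\beta_2]$, which is guaranteed by $c_1\geq1$; it is also worth noting that the argument actually produces the strict inequality $\alpha>\beta_2$, which will be useful later when one needs to order elements of $A$ lexicographically.
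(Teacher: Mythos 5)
Your proposal is correct and follows essentially the same route as the paper's own proof: reduce the claim to the clause $\beta_2\leq\alpha$ in the definition of basic commutators, then derive $wt(\beta_2)\leq\frac{1}{2}(c_1+1)<c_2+1=wt(\alpha)$ from $\beta_1>\beta_2$ and the hypothesis $c_1\leq 2c_2$. The paper even obtains the same strict inequality $\beta_2<\alpha$ that you point out.
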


\begin{proof}
Every element of $A$ has the form $[\beta,\alpha]$, where $\beta$
and $\alpha$ are basic commutators on $X$, $\beta>\alpha$ and
$wt(\beta)=c_1+1, wt(\alpha)=c_2+1$. Now, let
$\beta=[\beta_1,\beta_2]$, then in order to show that
$[\beta,\alpha]$ is a basic commutator on $X$, it is enough to show
that $\beta_2\leq \alpha$. Since $\beta=[\beta_1,\beta_2]$ is a
basic commutator on $X$, $\beta_1>\beta_2 $ and hence
$wt(\beta_2)\leq \frac{1}{2}wt(\beta)$. Now, if $c_1\leq 2c_2$, then
$\frac{1}{2}(c_1+1)<c_2+1$. Thus, since $wt(\beta)=c_1+1$, we have
$$wt(\beta_2)\leq \frac{1}{2}wt(\beta)=\frac{1}{2}(c_1+1)<c_2+1=wt(\alpha).$$
Therefore $\beta_2<\alpha$ and hence the result holds.
\end{proof}

Now put $H=[R,\ _{c_1}F,\ga_{c_2+1}(F)][R,\
_{c_2}F,\ga_{c_1+1}(F)]\cap \ga_{c_1+c_2+3}(F)$ we have the
following.

\begin{lem}
$[\ga_{c_1+1}(F),\ga_{c_2+1}(F)]\equiv \langle A\rangle \pmod{H}$.
\end{lem}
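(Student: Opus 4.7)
The statement asserts equality of two subgroups modulo $H$; the inclusion $\langle A\rangle\subseteq[\gamma_{c_1+1}(F),\gamma_{c_2+1}(F)]$ is immediate since each $[\beta,\alpha]\in A$ has $\beta\in\gamma_{c_1+1}(F)$ and $\alpha\in\gamma_{c_2+1}(F)$. The content lies in the reverse inclusion $[\gamma_{c_1+1}(F),\gamma_{c_2+1}(F)]\subseteq\langle A\rangle H$, which I would prove in two movements: first showing that $H$ is large enough to absorb all conjugation and higher-weight corrections, then decomposing an arbitrary commutator $[u,v]$ into a product of generators of~$A$.

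For the first movement, since $\gamma_2(F)\subseteq R$ a straightforward induction on $c_1$ gives $\gamma_{c_1+2}(F)\subseteq[R,{}_{c_1}F]$, whence
\[
[\gamma_{c_1+2}(F),\gamma_{c_2+1}(F)]\subseteq[R,{}_{c_1}F,\gamma_{c_2+1}(F)]\cap\gamma_{c_1+c_2+3}(F)\subseteq H,
\]
and symmetrically $[\gamma_{c_1+1}(F),\gamma_{c_2+2}(F)]\subseteq H$. The three subgroups lemma then yields
\[
[[\gamma_{c_1+1}(F),\gamma_{c_2+1}(F)],F]\subseteq[\gamma_{c_2+2}(F),\gamma_{c_1+1}(F)]\cdot[\gamma_{c_1+2}(F),\gamma_{c_2+1}(F)]\subseteq H.
\]
This last containment is the crucial input, since it forces $[u,v]^f\equiv[u,v]\pmod H$ for $f\in F$, and from this the standard identity $[u_1u_2,v]=[u_1,v]^{u_2}[u_2,v]$ upgrades to the bilinearity relations
\[
[u_1u_2,v]\equiv[u_1,v][u_2,v],\qquad [u,v_1v_2]\equiv[u,v_1][u,v_2]\pmod H
\]
for all $u,u_i\in\gamma_{c_1+1}(F)$ and $v,v_j\in\gamma_{c_2+1}(F)$.

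For the second movement, Hall's basis theorem lets me write any $u\in\gamma_{c_1+1}(F)$ as $u=\beta_1^{n_1}\cdots\beta_r^{n_r}\cdot u'$, with $\beta_i$ basic commutators of weight $c_1+1$ and $u'\in\gamma_{c_1+2}(F)$; similarly $v=\alpha_1^{m_1}\cdots\alpha_s^{m_s}\cdot v'$ with $v'\in\gamma_{c_2+2}(F)$. Applying bilinearity and absorbing the tails $[u',v]$ and $[\beta_i,v']$ into $H$ via the first movement, I obtain $[u,v]\equiv\prod_{i,j}[\beta_i,\alpha_j]^{n_im_j}\pmod H$. Each factor lies in $\langle A\rangle$: if $c_1>c_2$ the weight inequality forces $\beta_i>\alpha_j$ automatically, so $[\beta_i,\alpha_j]\in A$; if $c_1=c_2$ one treats the three cases $\beta_i>\alpha_j$, $\beta_i=\alpha_j$, $\beta_i<\alpha_j$, producing an element of $A$, the identity, or the inverse of an element of $A$ (via $[\beta_i,\alpha_j]=[\alpha_j,\beta_i]^{-1}$).

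The main obstacle is the containment $[[\gamma_{c_1+1}(F),\gamma_{c_2+1}(F)],F]\subseteq H$; once it is established the rest is formal commutator calculus. The three subgroups lemma is exactly the right tool, and the inclusion $\gamma_2(F)\subseteq R$ (coming from $G$ being abelian) is precisely what licenses the application. Note that the hypothesis $c_1\leq 2c_2$ is not used here; it was needed only in the preceding lemma, to guarantee that the elements of $A$ are themselves basic commutators on $X$.
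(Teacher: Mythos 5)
Your proof is correct and follows essentially the same route as the paper's: decompose $u\in\gamma_{c_1+1}(F)$ and $v\in\gamma_{c_2+1}(F)$ via Hall's basis theorem, expand the commutator, and use the Three Subgroup Lemma to push the conjugation corrections and higher-weight tails into $H$. Your writeup is in fact more complete than the paper's, since you isolate the key containment $[[\gamma_{c_1+1}(F),\gamma_{c_2+1}(F)],F]\subseteq H$ explicitly and handle the $c_1=c_2$ case where $[\beta_i,\alpha_j]$ may be the identity or the inverse of an element of $A$, details the paper leaves implicit.
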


\begin{proof}
Let $[\beta,\alpha]$ be a generator of the group
$[\ga_{c_1+1}(F),\ga_{c_2+1}(F)]$, so we have $\beta\in
\ga_{c_1+1}(F)$ and $\alpha\in \ga_{c_2+1}(F)$. Now by
Theorem~\ref{Hall} we can write
$\beta=\beta_1\beta_2\ldots\beta_r\eta$ and
$\alpha=\alpha_1\alpha_2\ldots\alpha_s\mu$ in which the $\beta_j$
are basic commutators on $X$ of weight $c_1+1$, the $\alpha_i$ are
basic commutators on $X$ of weight $c_2+1$, $\eta\in\ga_{c_1+2}(F)$
and $\mu\in\ga_{c_2+2}(F)$. Now
$[\beta,\alpha]$
will be a product of factors of the forms
$[\beta_j,\alpha_i]^{f_{ij}}$, $[\beta_j,\mu]^{g_j}$,
$[\eta,\alpha_i]^{h_i}$ and $[\eta,\mu]^k$, in which
$f_{ij},g_j,h_i,k\in \ga_{c_2+1}(F)$. Now by the Three Subgroup
Lemma  it is easy to see that $[\beta_j,\alpha_i,f_{ij}],
[\beta_j,\mu]^{g_j}, [\eta,\alpha_i]^{h_i}$ and $[\eta,\mu]^k\in H$.
Hence the result holds.
\end{proof}

Now the group $[\ga_{c_1+1}(F),\ga_{c_2+1}(F)]/H$ is the group
generated by the set $\bar{A}=\{aH\ \mid \ a\in A\}$. The following
shows that it is in fact the free abelian group with the basis
$\bar{A}$.

\begin{lem}
With the above notation and assumptions
$[\ga_{c_1+1}(F),\ga_{c_2+1}(F)]/H$ is the free abelian group with
the basis $\bar{A}$.
\end{lem}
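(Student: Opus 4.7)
The plan is to exhibit a homomorphism from $[\gamma_{c_1+1}(F),\gamma_{c_2+1}(F)]/H$ into a group in which the elements of $\bar{A}$ are already known to be independent, and then combine this with the previous lemma (which shows $\bar{A}$ generates the quotient) to conclude that $\bar{A}$ is in fact a free abelian basis.

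First I would observe that $[\gamma_{c_1+1}(F),\gamma_{c_2+1}(F)]\subseteq \gamma_{c_1+c_2+2}(F)$, and that by the definition of $H$ we have $H\subseteq \gamma_{c_1+c_2+3}(F)$. These two inclusions give a well-defined homomorphism
$$\varphi:\frac{[\gamma_{c_1+1}(F),\gamma_{c_2+1}(F)]}{H}\longrightarrow \frac{\gamma_{c_1+c_2+2}(F)}{\gamma_{c_1+c_2+3}(F)}.$$
By Theorem~\ref{Hall} (with $n=c_1+c_2+2$ and $i=1$), the target is the free abelian group on the basic commutators of weight $c_1+c_2+2$ on the letters $x_1,\ldots,x_{k+t}$.

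Next I would use the preceding lemma, which shows that each element of $A$ is itself a basic commutator of weight $c_1+c_2+2$ (this is where the hypothesis $c_1\leq 2c_2$ enters). Consequently $\varphi(\bar{A})$ is a subset of a free $\mathbb{Z}$-basis of the target, hence linearly independent. Any relation $\prod a_i^{n_i}\equiv 1\pmod H$ among the elements of $\bar{A}$ would map to a relation $\prod \varphi(a_i)^{n_i}=1$ in the target, forcing every $n_i=0$. Therefore $\bar{A}$ is a $\mathbb{Z}$-independent subset of $[\gamma_{c_1+1}(F),\gamma_{c_2+1}(F)]/H$. Combined with the previous lemma, which established that $\bar{A}$ generates this quotient, we conclude that it is a free abelian basis.

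The only substantive point to check carefully is the well-definedness of $\varphi$, which rests on the inclusion $H\subseteq \gamma_{c_1+c_2+3}(F)$; this is built into the definition of $H$ by intersection. The rest of the argument is formal: once $\varphi$ exists, Hall's basis theorem does all the work of providing independence, and the preceding lemma supplies generation, so there is no genuine computational obstacle.
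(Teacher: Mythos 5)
Your proposal is correct and follows essentially the same route as the paper: the paper likewise observes that the elements of $A$ are basic commutators of weight $c_1+c_2+2$ (by the preceding lemma, using $c_1\leq 2c_2$), hence linearly independent modulo $\gamma_{c_1+c_2+3}(F)$ by Hall's basis theorem, and that $H\subseteq\gamma_{c_1+c_2+3}(F)$ transfers this independence to the quotient by $H$, with generation supplied by the previous lemma. Your explicit homomorphism $\varphi$ merely formalizes the same argument.
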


\begin{proof}
The group is abelian and generated by $\bar{A}$, which is the image
of the elements of $A$ modulo $H$. The elements of $A$ are basic
commutators of weight $c_1+c_2+2$ (Lemma 2.1), and hence linearly
independent over $\gamma_{c_1+c_2+3}(F)$; the latter contains $H$,
so the elements of $A$ are also linearly independent modulo $H$.
\end{proof}

Now by the isomorphism
$$[\mathfrak{N}_{c_1},\mathfrak{N}_{c_2}]M(G)\cong\frac{[\ga_{c_1+1}(F),\ga_{c_2+1}(F)]/H}{[R,\ _{c_1}F,\ga_{c_2+1}(F)][R,\ _{c_2}F,\ga_{c_1+1}(F)]/H},$$
in order to determine the explicit structure of
$[\mathfrak{N}_{c_1},\mathfrak{N}_{c_2}]M(G)$ we only need to
determine the structure of $[R,\ _{c_1}F,\ga_{c_2+1}(F)][R,\
_{c_2}F,\ga_{c_1+1}(F)]/H$. We actually show that the mentioned
group is free abelian with basis $\cup_{i=1}^{t}\bar{B_i}$ where the
$B_i$ consist of $n_i$th powers of suitable elements of $\bar{A}$.
To do this we need the following lemma.

\begin{lem}
With the previous notation we have
$$[R,\ _{c_1}F,\ga_{c_2+1}(F)]\equiv\prod[R_i,\ _{c_1}F,\ga_{c_2+1}(F)] \pmod{H}$$ and
$$[R,\ _{c_2}F,\ga_{c_1+1}(F)]\equiv\prod[R_i,\ _{c_2}F,\ga_{c_1+1}(F)]\pmod{H}$$
\end{lem}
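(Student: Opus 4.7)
I will prove only the first congruence, since the second follows verbatim after interchanging the roles of $c_1$ and $c_2$. The starting point is the factorisation $R=\gamma_2(F)R_1R_2\cdots R_t$, which lets every element $r\in R$ be written as $r=\gamma\, r_1r_2\cdots r_t$ with $\gamma\in\gamma_2(F)$ and $r_i\in R_i$. A typical generator of $[R,{}_{c_1}F,\gamma_{c_2+1}(F)]$ is then $[[r,f_1,\ldots,f_{c_1}],\delta]$ with $f_j\in F$ and $\delta\in\gamma_{c_2+1}(F)$. My plan is to iterate the identity $[ab,c]=[a,c][a,c,b][b,c]$ (and the identity $[a,c]^b=[a,c][a,c,b]$) in the $r$-slot of this iterated commutator until it is expressed as a product of \emph{main} terms $[r_i,f_1,\ldots,f_{c_1},\delta]$ (one for each $i$) and $[\gamma,f_1,\ldots,f_{c_1},\delta]$, multiplied by \emph{cross} terms, i.e., iterated commutators in which at least two letters from $\{\gamma,r_1,\ldots,r_t\}$ appear.

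Everything then reduces to showing that the $\gamma$-main term and every cross term lies in $H$. The $\gamma$-main term belongs to $[\gamma_2(F),{}_{c_1}F,\gamma_{c_2+1}(F)]=[\gamma_{c_1+2}(F),\gamma_{c_2+1}(F)]\subseteq\gamma_{c_1+c_2+3}(F)$, and since $\gamma_2(F)\subseteq R$ it also lies in $[R,{}_{c_1}F,\gamma_{c_2+1}(F)]$; so it sits in $H$. For a cross term, membership in $[R,{}_{c_1}F,\gamma_{c_2+1}(F)]$ is automatic, because rewriting a generator of that subgroup via a commutator identity cannot leave the subgroup, and the main terms already lie there. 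The content of the argument is the weight bound: a cross term involves either two (weight-$1$) factors $r_i,r_j$ with $i\neq j$ or one (weight-$2$) factor $\gamma$, together with each of $f_1,\ldots,f_{c_1}$ (weight $1$) and $\delta$ (weight $c_2+1$) appearing at least once. Summing weights gives at least $2+c_1+(c_2+1)=c_1+c_2+3$, so the cross term belongs to $\gamma_{c_1+c_2+3}(F)$ and hence to $H$.

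Putting the pieces together yields $[[r,f_1,\ldots,f_{c_1}],\delta]\equiv\prod_{i=1}^t[r_i,f_1,\ldots,f_{c_1},\delta]\pmod H$, and running over all generators of $[R,{}_{c_1}F,\gamma_{c_2+1}(F)]$ gives the first displayed congruence; the second is obtained by swapping $c_1$ and $c_2$.

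\textbf{Expected obstacle.} The only delicate point is confirming that under the iterated expansion each of $f_1,\ldots,f_{c_1}$ and $\delta$ appears \emph{exactly once} in every cross-term summand, because otherwise the weight count could miss the bound $c_1+c_2+3$. This is handled by induction on $c_1$: the two commutator identities $[ab,c]=[a,c][a,c,b][b,c]$ and $[a,c]^b=[a,c][a,c,b]$ use the ``$c$-letter'' exactly once on each factor of the right-hand side, so repeated application preserves the property that each $f_j$ and $\delta$ occurs exactly once in every resulting summand. Once this bookkeeping is in place, the weight bound and hence the membership in $H$ follow immediately.
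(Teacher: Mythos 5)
Your argument is correct and rests on the same underlying idea as the paper's, but it is executed at the level of elements rather than subgroups. The paper's proof is three lines of ``routine commutator calculus'': since $R=\prod_{i}R_i^F\,\gamma_2(F)$ is a product of normal subgroups, the identity $[AB,C]=[A,C][B,C]$ for normal subgroups gives $[R,{}_{c_1}F,\gamma_{c_2+1}(F)]=\prod_i[R_i^F,{}_{c_1}F,\gamma_{c_2+1}(F)]\cdot[\gamma_2(F),{}_{c_1}F,\gamma_{c_2+1}(F)]$, after which the $\gamma_2(F)$-factor and the discrepancy between $R_i^F$ and $R_i$ are absorbed into $H$ for exactly the reason you give (they lie in $\gamma_{c_1+c_2+3}(F)$ and in $[R,{}_{c_1}F,\gamma_{c_2+1}(F)]$). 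Working with normal closures spares the paper your generator-by-generator expansion; what your version buys is an explicit justification of the weight estimates that the paper leaves implicit. One correction to your bookkeeping: under repeated application of $[ab,c]=[a,c][a,c,b][b,c]$, the letters $f_1,\dots,f_{j-1}$ buried inside the factors present at stage $j$ can recur, so the invariant that survives the induction is that each $f_j$ and $\delta$ occurs \emph{at least} once in every summand, not exactly once; since extra occurrences only increase the weight, the bound $c_1+c_2+3$ and hence your conclusion are unaffected.
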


\begin{proof}
By routine commutator calculus we have
\begin{eqnarray*}
[R,{}_{c-1}F,\gamma_{c_2+1}(F)] & = & \prod_{i=1}^t
[R_i^F\gamma_2(F), {}_{c-1}F, \gamma_{c_2+1}(F)]\\
& = & \prod_{i=1}^t [R_i^F, {}_{c-1}F, \gamma_{c_2+1}(F)]
[\gamma_2(F), {}_{c-1}F, \gamma_{c_2+1}(F)]\\
&\equiv & \prod_{i=1}^t [R_i, {}_{c-1}F, \gamma_{c_2+1}(F)]\pmod{H}.
\end{eqnarray*}
\end{proof}
The following lemma will do most of the work.

\begin{lem}\label{main}
$[R,\ _{c_1}F,\ga_{c_2+1}(F)][R,\ _{c_2}F,\ga_{c_1+1}(F)]/H$ is the
free abelian group with the basis $\cup_{j=1}^{t}\bar{B_j}$, where
\begin{center}
$B_j=\{[\beta,\alpha]^{n_j} \ | \ [\beta,\alpha]\in A$ and $x_{k+j}$
does occur in $[\beta,\alpha]\}$ and \ \ $\bar{}$ \ \ denotes the
natural homomorphism $[\ga_{c_1+1}(F),\ga_{c_2+1}(F)]\longrightarrow
[\ga_{c_1+1}(F),\ga_{c_2+1}(F)]/H$.
\end{center}
\end{lem}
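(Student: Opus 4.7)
The plan is to combine Lemma 2.4 with a commutator-collection argument and then to invoke the freeness of $[\ga_{c_1+1}(F),\ga_{c_2+1}(F)]/H$ established in Lemma 2.3. The two factors in the product $[R,\ _{c_1}F,\ga_{c_2+1}(F)][R,\ _{c_2}F,\ga_{c_1+1}(F)]$ are treated symmetrically, so I would write up the argument for the first factor and say that the second is identical with $c_1$ and $c_2$ interchanged.

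By Lemma 2.4, modulo $H$ I may replace $[R,\ _{c_1}F,\ga_{c_2+1}(F)]$ by $\prod_{j=1}^{t}[R_j,\ _{c_1}F,\ga_{c_2+1}(F)]$, so it suffices to analyse, for each fixed $j$, a typical generator $u=[r,f_1,\ldots,f_{c_1},g]$ with $r\in R_j$, $f_i\in F$ and $g\in\ga_{c_2+1}(F)$. A conjugating exponent on $r$ can be absorbed into $H$, so we may take $r=x_{k+j}^{n_j}$. The core computation is then the collection identity. Using $[x^{n},y]=[x,y]^{n}[x,y,x]^{\binom{n}{2}}\cdots$ repeatedly through each of the slots $f_1,\ldots,f_{c_1},g$, one obtains
\begin{equation*}
[x_{k+j}^{n_j},f_1,\ldots,f_{c_1},g]\ \equiv\ [x_{k+j},f_1,\ldots,f_{c_1},g]^{n_j}\pmod{H},
\end{equation*}
because every correction term still involves $x_{k+j}^{n_j}$ (hence lies in $[R_j,\ _{c_1}F,\ga_{c_2+1}(F)]$) and picks up at least one extra entry (hence has weight at least $c_1+c_2+3$, landing in $\ga_{c_1+c_2+3}(F)$), so it belongs to the intersection defining $H$. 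Lemma 2.2 then rewrites $[x_{k+j},f_1,\ldots,f_{c_1},g]$, modulo $H$, as a product of elements of $A$, and each such basic commutator must still contain the letter $x_{k+j}$ because collection never removes a variable. Its $n_j$-th power therefore lies in $B_j$, which shows that $[R,\ _{c_1}F,\ga_{c_2+1}(F)]/H\subseteq \langle\cup_j\bar{B_j}\rangle$. The reverse inclusion is immediate, since every $[\beta,\alpha]^{n_j}\in B_j$ is realised by such a collected commutator with $r=x_{k+j}^{n_j}$.

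For freeness I would appeal to Lemma 2.3: $[\ga_{c_1+1}(F),\ga_{c_2+1}(F)]/H$ is free abelian on $\bar A$, and the images $\bar B_j$ consist, for each basis element $\overline{[\beta,\alpha]}\in\bar A$, of prescribed integer powers of that generator. Since a subgroup of a free abelian group generated by integer multiples of distinct basis elements is again free abelian, the conclusion follows, with a basis obtained from $\cup_j\bar{B_j}$ in the obvious fashion (the divisibility $n_{j+1}\mid n_j$ organises any overlap between different $B_j$'s sharing the same basic commutator).

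The main obstacle is the bookkeeping in the collection step: one must verify that every Hall-type error term produced while pushing the $n_j$ to the outside of the $(c_1+2)$-fold nested commutator simultaneously retains an occurrence of $x_{k+j}^{n_j}$ (so that it stays in $[R_j,\ _{c_1}F,\ga_{c_2+1}(F)]$) and carries total weight at least $c_1+c_2+3$ (so that it lies in $\ga_{c_1+c_2+3}(F)$); only then does it drop into $H$. Once this is in hand, both the spanning and freeness halves reduce to formal consequences of Lemmas 2.2 and 2.3.
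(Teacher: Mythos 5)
Your overall route is the paper's: reduce to the individual $R_j$ via Lemma 2.4, pull the exponent $n_j$ out of the nested commutator by collection, and rewrite the result in terms of $A$ using Lemma 2.2 and the freeness from Lemma 2.3. (The paper's own proof is in fact terser: it simply asserts $[R_i,\ _{c_1}F]\equiv\langle\beta^{n_i}\rangle\pmod{\gamma_{c_1+2}(F)}$ and then commutates with $\gamma_{c_2+1}(F)$.) You also rightly notice that, as literally defined, the $B_j$ overlap and one must use $n_{j+1}\mid n_j$ to extract an honest basis, a point the paper passes over. However, the justification you give for the crucial collection step is wrong. The corrections to $[x_{k+j}^{n_j},f_1,\ldots,f_{c_1},g]=[x_{k+j},f_1,\ldots,f_{c_1},g]^{n_j}\cdot(\ast)$ produced by iterating $[xy,z]=[x,z]^y[y,z]$ (or by Hall--Petrescu) involve the letter $x_{k+j}$ repeated, \emph{not} the block $x_{k+j}^{n_j}$: already $[x^{2},y]=[x,y]^{2}[x,y,x]^{[x,y]}$, and $[x,y,x]$ has no slot occupied by $x^{2}$. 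So the correction terms are not visibly in $[R_j,\ _{c_1}F,\gamma_{c_2+1}(F)]$, and the sentence ``every correction term still involves $x_{k+j}^{n_j}$ (hence lies in $[R_j,\ _{c_1}F,\gamma_{c_2+1}(F)]$)'' does not hold as stated.

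What saves the argument --- and is the idea missing from your write-up --- is that $\gamma_2(F)\subseteq R$. The corrections created while collecting through the slots $f_1,\ldots,f_{c_1}$ carry extra weight, so after the final commutation with $g$ they lie in $[\gamma_{c_1+2}(F),\gamma_{c_2+1}(F)]$; since $\gamma_{c_1+2}(F)=[\gamma_2(F),\ _{c_1}F]\subseteq[R,\ _{c_1}F]$, this is contained in $[R,\ _{c_1}F,\gamma_{c_2+1}(F)]\cap\gamma_{c_1+c_2+3}(F)\subseteq H$. The corrections from the outermost step lie in the normal closure of $[[\gamma_{c_1+1}(F),\gamma_{c_2+1}(F)],\gamma_{c_1+1}(F)]\subseteq[[R,\ _{c_2}F],\gamma_{c_1+1}(F)]\cap\gamma_{c_1+c_2+3}(F)\subseteq H$, using $\gamma_{c_1+c_2+2}(F)\subseteq\gamma_{c_2+2}(F)\subseteq[R,\ _{c_2}F]$. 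Note that membership in the \emph{product subgroup} must be checked and high weight alone is not enough, precisely because $H$ is defined as an intersection with $\gamma_{c_1+c_2+3}(F)$ rather than containing all of it. Once the correction terms are placed in $H$ this way, the remainder of your argument (spanning via Lemma 2.2, freeness via Lemma 2.3 and the divisibility of the $n_j$) goes through.
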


\begin{proof}
Clearly $[R,\ _{c_1}F]\equiv\prod_{i=k+1}^{k+t}[R_i,\ _{c_1}F] \pmod{\ga_{c_1+2}(F)}$.\\
Also
\begin{center}
$[R_i,\ _{c_1}F]\equiv\langle\beta^{n_i} \mid \beta$ is a basic
commutator of weight $c_1+1$ on $X$ s.t. $x_{k+i}$ does appear in
it$\rangle \ \pmod{\ga_{c_1+2}(F)}$.
\end{center}
Therefore
\begin{center}
$[R_i,\ _{c_1}F,\ga_{c_2+1}(F)]\equiv\langle[\beta,\alpha]^{n_i}\mid
\ [\beta,\alpha]\in A$ and $x_{k+i}$ does appear in  $\beta \
\rangle \ \pmod{H}$.
\end{center}
Similarly,
\begin{center}
$[R_i,\ _{c_2}F,\ga_{c_1+1}(F)]\equiv\langle[\beta,\alpha]^{n_i}\mid
\ [\beta,\alpha]\in A$ and $x_{k+i}$ does appear in   $\alpha
\rangle \pmod{H}$.
\end{center}
Hence $[R,\ _{c_1}F,\ga_{c_2+1}(F)][R,\
_{c_2}F,\ga_{c_1+1}(F)]\equiv\langle\bigcup B_j\rangle \pmod{H}.$

\end{proof}
Now we are in a position to give an explicit structure for
$[\mathfrak{N}_{c_1},\mathfrak{N}_{c_2}]M(G)$. It only remains to
compute $|B_j|$ for $j=1,...,t$. Bearing in mind Lemma 2.3 it is
clear that each $\bar{B_j}$ is linearly independent modulo $H$
therefore the size of $\bar{B_j}$ is as same as that of $B_j$, so it
is enough to compute the size of $B_j$. The cases $c_1=c_2$ and
$c_1>c_2$ are essentially different in computing $|B_j|$. If
$c_1>c_2$ for an arbitrary $j$ we can write $B_j=B_{1j}\cup
B_{2j}\cup B_{3j}$ in which

$B_{1j}=\{[\beta,\alpha]^{n_j} \ | \ [\beta,\alpha]^{n_j}\in B_j$
and $x_{k+j}$ only appears in $\beta\}$,

$B_{2j}=\{[\beta,\alpha]^{n_j} \ | \ [\beta,\alpha]^{n_j}\in B_j$
and $x_{k+j}$ only appears in $\alpha\}$,

$B_{3j}=\{[\beta,\alpha]^{n_j} \ | \ [\beta,\alpha]^{n_j}\in B_j$
and
$x_{k+j}$ appears in both $\beta$ and $\alpha\}$.\\
It is easy to see that the union is disjoint and we have

$|B_{1j}|=(\chi_{c_1+1}(k+j)-\chi_{c_1+1}(k+j-1))\chi_{c_2+1}(k+j-1),$

$|B_{2j}|=\chi_{c_1+1}(k+j-1)(\chi_{c_2+1}(k+j)-\chi_{c_2+1}(k+j-1)),$\\
and

$|B_{3j}|=(\chi_{c_1+1}(k+j)-\chi_{c_1+1}(k+j-1))(\chi_{c_2+1}(k+j)-\chi_{c_2+1}(k+j-1)),$
so
$|B_j|=\chi_{c_1+1}(k+j)\chi_{c_2+1}(k+j)-\chi_{c_1+1}(k+j-1)\chi_{c_2+1}(k+j-1)$.

In the case $c_1=c_2$ it is easy to see that $A$ is in fact the set
of all basic commutators of weight 2 on the set of all basic
commutators of weight $c_1$, so we have
$|B_j|=\chi_2(\chi_{c_1+1}(k+j))-\chi_2(\chi_{c_1+1}(k+j-1))$.

Now the following theorem gives the desired structure.

\begin{thm}
Let $G \cong \mathbf{Z}^{(k)}\oplus \mathbb{Z}_{n_1}\oplus
\mathbb{Z}_{n_2}\oplus\cdots\oplus\mathbb{Z}_{n_t}$ be a finitely
generated abelian group with $n_{i+1}\mid n_i$ for all $1\leq i\leq
t-1$, if $c_2\leq c_1\leq 2c_2$ then,
$$[\mathfrak{N}_{c_1},\mathfrak{N}_{c_2}]M(G)\cong
\mathbb{Z}^{(b_{k})}\oplus \mathbb{Z}_{n_1}^{(b_{k+1}-b_k)}\oplus
\mathbb{Z}_{n_2}^{(b_{k+2}-b_{k+1})}\oplus\ldots\oplus\mathbb{Z}_{n_t}^{(b_{k+t}-b_{k+t-1})}$$
where $b_i=\chi_{c_1+1}(i)\chi_{c_2+1}(i)$, if $c_1>c_2$ and
$b_i=\chi_2(\chi_{c_1+1}(i))$ if $c_1=c_2$.
\end{thm}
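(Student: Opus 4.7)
The plan is to combine Lemmas 2.3 and 2.5 with the counts computed just before the statement. By Lemma 2.3 the ambient quotient $[\gamma_{c_1+1}(F),\gamma_{c_2+1}(F)]/H$ is the free abelian group with basis $\bar A$, so I may think of it additively as $\bigoplus_{a\in A}\mathbb{Z}\bar a$. By Lemma 2.5 the denominator is generated inside this free abelian group by $\bigcup_{j=1}^{t}\bar B_j$, and the crucial feature is that every generator in $\bar B_j$ has the form $n_j\bar a$ for a single basis element $\bar a$. Consequently the denominator decomposes as $\bigoplus_{a\in A}N_a\bar a$, where $N_a\subseteq\mathbb{Z}$ is the subgroup generated by $\{n_j\mid x_{k+j}\text{ appears in }a\}$; in particular there are no cross-relations between distinct basis vectors.

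Using the divisibility $n_{j+1}\mid n_j$, this collapses to $N_a=n_{j^\ast(a)}\mathbb{Z}$, where $j^\ast(a)$ is the largest index $j$ with $x_{k+j}$ occurring in $a$, with the convention that $j^\ast(a)=0$ and $n_0=0$ when no such $j$ exists. Hence
$$[\mathfrak{N}_{c_1},\mathfrak{N}_{c_2}]M(G)\cong\bigoplus_{a\in A}\mathbb{Z}/n_{j^\ast(a)}\mathbb{Z}.$$
I then finish by grouping the basis according to $j^\ast$: the elements with $j^\ast(a)=0$ (those involving only the free generators $x_1,\dots,x_k$) number $b_k$, giving $\mathbb{Z}^{(b_k)}$; for each $j\geq 1$, the elements with $j^\ast(a)=j$ are those $[\beta,\alpha]\in A$ using only $x_1,\dots,x_{k+j}$ and genuinely involving $x_{k+j}$, so by inclusion/exclusion they number $b_{k+j}-b_{k+j-1}$, contributing a summand $\mathbb{Z}_{n_j}^{(b_{k+j}-b_{k+j-1})}$.

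The main obstacle is really bookkeeping in this last step: one must verify that the two pre-theorem formulas for $|B_j|$ (the three-fold split $B_j=B_{1j}\cup B_{2j}\cup B_{3j}$ yielding $|B_j|=\chi_{c_1+1}(k+j)\chi_{c_2+1}(k+j)-\chi_{c_1+1}(k+j-1)\chi_{c_2+1}(k+j-1)$ when $c_1>c_2$, and the expression $|B_j|=\chi_2(\chi_{c_1+1}(k+j))-\chi_2(\chi_{c_1+1}(k+j-1))$ when $c_1=c_2$) both telescope precisely to the quantity $b_{k+j}-b_{k+j-1}$ required by the statement, with both $b_i$ formulas being immediate consequences of Witt's formula. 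Once this is confirmed the decomposition follows directly from the elementary-divisor form of the structure theorem for finitely generated abelian groups.
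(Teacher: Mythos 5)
Your proposal is correct and follows essentially the same route as the paper, whose proof of this theorem is simply the observation that it follows from Lemmas 2.3 and 2.5 together with the counts of $|B_j|$ computed immediately beforehand. If anything, you are more careful than the paper: your reduction of the denominator to $\bigoplus_{a\in A}n_{j^{\ast}(a)}\mathbb{Z}\,\bar a$ via the divisibility $n_{j+1}\mid n_j$, and the regrouping of the basis by the largest torsion index occurring, makes explicit the step the paper leaves implicit (its stated generating set $\bigcup_j\bar B_j$ contains redundant generators, and its computation of $|B_j|$ silently counts only those commutators in which $x_{k+j}$ is the highest-indexed letter, exactly your $j^{\ast}(a)=j$ condition).
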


\begin{proof}
The proof easily follows from Lemma~\ref{main}.
\end{proof}

Comparing this theorem with the main theorem of Mashayekhy and
Parvizi (2006), it is easy to see that they agree on the variety
$\mathfrak{N}_{c,1}$ in the formula for the Baer invariant.

\section{$[\mathfrak{N}_{c_1},\mathfrak{N}_{c_2}]$-Capability }

The concept of capable groups occured in work of P. Hall for
classifying $p$-groups. Determining such groups is an interesting
problem to study. Some researches has done on it which for example
see Burns and Ellis (1998), Ellis (1996), Moghaddam and Kayvanfar
(1997), and Magidin (2005). In this section we explicitly determine
the structure of all
$[\mathfrak{N}_{c_1},\mathfrak{N}_{c_2}]$-capable groups in the
class of finitely generated abelian groups. When $c_2\leq c_1\leq
2c_c$ to do this we wish to use Theorems 1.9, 1.10, and Corollary
1.11. To use them the structure of the subgroups of a finitely
generated abelian group is needed as well as the structure of the
Baer invariant of $G$. Theorem 2.6 gives the latter and the
following will determine the structure of the desired subgroups.

\begin{lem}
Let $G$ be a finitely generated abelian group and $H\leq G$. Then
$r_0(G)=r_0(G/H)+r_0(H)$, where $r_0(X)$ is the torsion free rank of
a finitely generated abelian group $X$.
\end{lem}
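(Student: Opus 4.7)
The plan is to identify the torsion-free rank with a well-behaved numerical invariant that is additive on short exact sequences, and then apply that additivity to the sequence $0\to H\to G\to G/H\to 0$. The cleanest choice is $r_0(X)=\dim_{\mathbb{Q}}(X\otimes_{\mathbb{Z}}\mathbb{Q})$, which agrees with the usual definition: if $X\cong \mathbb{Z}^{(r)}\oplus T$ with $T$ finite (structure theorem for finitely generated abelian groups), then $T\otimes_{\mathbb{Z}}\mathbb{Q}=0$ and hence $X\otimes_{\mathbb{Z}}\mathbb{Q}\cong \mathbb{Q}^{(r)}$, so the dimension is exactly $r=r_0(X)$.

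First I would note that subgroups and quotients of a finitely generated abelian group are again finitely generated abelian, so $H$ and $G/H$ both have a well-defined torsion-free rank. Then I would invoke the flatness of $\mathbb{Q}$ as a $\mathbb{Z}$-module (or equivalently, the exactness of localisation) to conclude that the induced sequence
\[
0\lra H\otimes_{\mathbb{Z}}\mathbb{Q}\lra G\otimes_{\mathbb{Z}}\mathbb{Q}\lra (G/H)\otimes_{\mathbb{Z}}\mathbb{Q}\lra 0
\]
is a short exact sequence of finite-dimensional $\mathbb{Q}$-vector spaces. Additivity of dimension for such sequences then gives $\dim_{\mathbb{Q}}(G\otimes\mathbb{Q})=\dim_{\mathbb{Q}}(H\otimes\mathbb{Q})+\dim_{\mathbb{Q}}((G/H)\otimes\mathbb{Q})$, which is precisely $r_0(G)=r_0(H)+r_0(G/H)$.

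If a more elementary route is preferred, one can argue directly: choose elements $h_1,\ldots,h_m\in H$ whose images generate a maximal free subgroup of $H$, and elements $g_1,\ldots,g_n\in G$ whose images in $G/H$ generate a maximal free subgroup of $G/H$. A short verification shows that $\{h_1,\ldots,h_m,g_1,\ldots,g_n\}$ is $\mathbb{Z}$-linearly independent in $G$ modulo torsion (any nontrivial relation would, after projecting to $G/H$, force a relation among the $g_j$, and the remainder would give a relation among the $h_i$), and that their images span a finite-index subgroup modulo torsion. Either formulation is routine; there is no real obstacle, since the argument is just the additivity of the rank function on short exact sequences of finitely generated abelian groups.
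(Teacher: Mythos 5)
Your argument is correct. Note, though, that the paper does not actually prove this lemma at all: its ``proof'' is simply a citation to Fuchs (1970), where the statement appears as a standard fact about finitely generated abelian groups. So there is nothing in the paper to compare against step by step; what you have done is supply a self-contained proof where the authors chose to defer to the literature. Your main route --- identifying $r_0(X)$ with $\dim_{\mathbb{Q}}(X\otimes_{\mathbb{Z}}\mathbb{Q})$ and using flatness of $\mathbb{Q}$ over $\mathbb{Z}$ to get exactness of the tensored sequence, then additivity of dimension --- is clean and complete, and has the advantage of generalising immediately (e.g.\ to arbitrary abelian groups of finite rank, or to modules over a domain with rank defined via the fraction field). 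Your alternative elementary argument is also sound, though the phrase ``a short verification shows'' hides the one point that deserves a line of care: to see that $m+n$ is actually the full rank of $G$ (and not just a lower bound), you must check that every $g\in G$ has a nonzero multiple in $T(G)+\langle h_1,\ldots,h_m,g_1,\ldots,g_n\rangle$, which follows by first clearing the image of $g$ in $G/H$ into $H$ and then clearing the resulting element of $H$. Either version would serve as a legitimate replacement for the citation.
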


\begin{proof}
See Fuchs (1970).
\end{proof}

In the case of $p$-groups the following theorem has an important
role in our investigation.

\begin{thm}\label{pgroup}
Let $G\cong \mathbb{Z}_{p^{\alpha_1}}\oplus
\mathbb{Z}_{p^{\alpha_2}}\oplus\cdots\oplus
\mathbb{Z}_{p^{\alpha_k}}$ be a finite abelian $p$-group, where
$\alpha_{i+1}\leq \alpha_i$ for all $1\leq i \leq k-1$, and let $H$
be a subgroup of $G$. Then $H\cong \mathbb{Z}_{p^{\beta_1}}\oplus
\mathbb{Z}_{p^{\beta_2}}\oplus\cdots\oplus \mathbb{Z}_{p^{\beta_k}}$
where $\beta_{i+1}\leq \beta_i$ for all $1\leq i \leq k-1$ and
$0\leq \beta_i\leq \alpha_i$ for $1\leq i \leq k$.
\end{thm}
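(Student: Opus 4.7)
The plan is to first invoke the fundamental theorem of finite abelian groups for $H$ itself: since $H\leq G$ and $G$ is a finite abelian $p$-group, so is $H$, hence $H\cong \mathbb{Z}_{p^{\beta_1}}\oplus\cdots\oplus\mathbb{Z}_{p^{\beta_m}}$ for a unique nonincreasing sequence $\beta_1\geq\cdots\geq\beta_m\geq 1$. The work then splits into (i) showing $m\leq k$, so the decomposition may be padded with trivial summands $\beta_{m+1}=\cdots=\beta_k=0$, and (ii) showing $\beta_i\leq\alpha_i$ for each $i=1,\ldots,k$.

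Step (i) I would handle by considering the socle $H[p]=\{h\in H\mid ph=0\}$ as an $\mathbb{F}_p$-subspace of $G[p]$. Because the socle of a direct sum of cyclic $p$-groups has $\mathbb{F}_p$-dimension equal to the number of its cyclic summands, I obtain $m=\dim_{\mathbb{F}_p} H[p]\leq \dim_{\mathbb{F}_p} G[p]=k$.

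For step (ii) the key is to introduce, for any finite abelian $p$-group $A$ with invariants $\gamma_1\geq\gamma_2\geq\cdots$, the quantity
$$N_j(A)=\dim_{\mathbb{F}_p}\bigl(p^{j-1}A\cap A[p]\bigr).$$
A direct computation inside each cyclic factor $\mathbb{Z}_{p^\gamma}$ shows that $p^{j-1}\mathbb{Z}_{p^\gamma}\cap\mathbb{Z}_{p^\gamma}[p]$ is one-dimensional when $\gamma\geq j$ and zero otherwise, so $N_j(A)=\#\{i:\gamma_i\geq j\}$. The inclusion $H\leq G$ gives $p^{j-1}H\cap H[p]\subseteq p^{j-1}G\cap G[p]$, whence $N_j(H)\leq N_j(G)$, i.e.
$$\#\{i:\beta_i\geq j\}\leq\#\{i:\alpha_i\geq j\}\qquad\text{for every }j\geq 1.$$
Since both sequences are nonincreasing, specializing $j=\beta_i$ (when $\beta_i\geq 1$) forces $\alpha_i\geq\beta_i$; for $i>m$ the inequality $\beta_i=0\leq\alpha_i$ is trivial.

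The main obstacle is step (ii): a naive rank argument via the socle only recovers $m\leq k$, and one must locate the right numerical invariant—here the Ulm-type functions $N_j$—that is manifestly monotone under subgroup inclusion yet still recovers the full elementary divisor sequence. Once the identification $N_j(A)=\#\{i:\gamma_i\geq j\}$ is in place, the rest of the argument is purely combinatorial.
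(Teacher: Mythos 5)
Your proof is correct. Note, however, that the paper does not actually prove this statement at all: its entire ``proof'' is the citation ``See Fuchs (1970),'' so there is no argument in the paper to compare against step by step. What you supply is a self-contained elementary proof, and it is sound: the socle computation $\dim_{\mathbb{F}_p}H[p]\leq\dim_{\mathbb{F}_p}G[p]=k$ correctly bounds the number of cyclic summands, and your invariant $N_j(A)=\dim_{\mathbb{F}_p}\bigl(p^{j-1}A\cap A[p]\bigr)=\dim_{\mathbb{F}_p}\bigl((p^{j-1}A)[p]\bigr)$ is additive over direct sums, equals $\#\{i:\gamma_i\geq j\}$ by the one-dimensional-or-zero computation in each cyclic factor, and is visibly monotone under subgroup inclusion because both $p^{j-1}(\cdot)$ and $(\cdot)[p]$ are. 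The final combinatorial step (taking $j=\beta_i$ and using that both invariant sequences are nonincreasing, so $\#\{l:\alpha_l\geq\beta_i\}\geq i$ forces $\alpha_i\geq\beta_i$) is exactly right. This is essentially the classical Ulm-invariant comparison argument one finds in Fuchs, so your route is the standard one made explicit; the only thing the paper's citation buys over your version is brevity, while your version buys the reader a verifiable argument.
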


\begin{proof}
See Fuchs (1970).
\end{proof}

\begin{thm}
Let $G\cong \mathbb{Z}^{(k)}\oplus
\mathbb{Z}_{n_1}\oplus\cdots\oplus \mathbb{Z}_{n_t}$ be a finitely
generated abelian group, where $n_{i+1}\mid n_i$ for $1\leq i \leq
t-1$, and let $H$ be a finite subgroup of $G$. Then $H\cong
\mathbb{Z}_{m_1}\oplus\cdots\oplus \mathbb{Z}_{m_t}$, where
$m_{i+1}\mid m_i$ for all $1\leq i \leq t-1$ and $m_i\mid n_i$ for
all $1\leq i \leq t$.
\end{thm}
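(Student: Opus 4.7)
The plan is to reduce the statement to the finite abelian $p$-group case handled by Theorem~\ref{pgroup}. The proof proceeds in two reductions followed by a reassembly, and there is no genuine obstacle: Theorem~\ref{pgroup} carries all of the structural content, and the only care needed is bookkeeping to keep the same number $t$ of summands across every prime, which is exactly what the divisibility chain $n_{i+1}\mid n_i$ affords.

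First I would observe that, since $H$ is finite, every element of $H$ has finite order, so $H$ lies in the torsion subgroup $T=\mathbb{Z}_{n_1}\oplus\cdots\oplus\mathbb{Z}_{n_t}$ of $G$; we may therefore replace $G$ by $T$ and assume $k=0$. Next, write the prime factorization $n_i=\prod_p p^{a_{ip}}$; the hypothesis $n_{i+1}\mid n_i$ becomes $a_{(i+1)p}\leq a_{ip}$ for every prime $p$ and every index $i$. For each $p$, the $p$-primary component of $G$ is $G_p\cong \mathbb{Z}_{p^{a_{1p}}}\oplus\cdots\oplus\mathbb{Z}_{p^{a_{tp}}}$ (with zero summands permitted), and the standard primary decomposition of finite abelian groups yields $G=\bigoplus_p G_p$ and $H=\bigoplus_p H_p$, where $H_p=H\cap G_p\leq G_p$.

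Applying Theorem~\ref{pgroup} to each inclusion $H_p\leq G_p$ produces exponents $b_{ip}$ with $0\leq b_{ip}\leq a_{ip}$ and $b_{(i+1)p}\leq b_{ip}$ such that $H_p\cong \mathbb{Z}_{p^{b_{1p}}}\oplus\cdots\oplus \mathbb{Z}_{p^{b_{tp}}}$. Setting $m_i=\prod_p p^{b_{ip}}$ and reassembling via the Chinese Remainder Theorem gives
$$H=\bigoplus_p H_p\;\cong\;\bigoplus_{i=1}^{t}\bigoplus_p\mathbb{Z}_{p^{b_{ip}}}\;\cong\;\bigoplus_{i=1}^{t}\mathbb{Z}_{m_i},$$
and the required divisibility conditions $m_{i+1}\mid m_i$ and $m_i\mid n_i$ follow prime-by-prime from $b_{(i+1)p}\leq b_{ip}\leq a_{ip}$.
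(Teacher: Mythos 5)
Your proof is correct and follows essentially the same route as the paper: reduce to the torsion subgroup, decompose into primary ($p$-Sylow) components, apply Theorem~\ref{pgroup} to each prime, and reassemble. You simply make explicit the exponent bookkeeping and the Chinese Remainder reassembly that the paper leaves implicit.
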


\begin{proof}
Trivially $H\leq t(G)$, the maximal torsion subgroup of $G$, so
without loss of generality we may assume that $G$ is finite. It is
well known that $G\cong S_{p_1}\oplus \cdots\oplus S_{p_t}$, where
$S_{p_i}$ is the $p_i$-Sylow subgroup of $G$. One may easily show
that if $H\cong S'_{p_1}\oplus \cdots\oplus S'_{p_t}$ is the same
decomposition for $H$, then $S'_{p_i}\leq S_{p_i}$ for all $1\leq i
\leq t$. Therefore it is enough to consider finite abelian
$p$-groups. Now Theorem~\ref{pgroup} completes the proof.
\end{proof}

Proceeding now to
$[\mathfrak{N}_{c_1},\mathfrak{N}_{c_2}]$-capability, note that
$[\mathfrak{N}_1,\mathfrak{N}_1]=\mathfrak{S}_2$ is the variety of
metabelian groups (that is groups of solvability length at most 2)
and, according to Theorem 2.6, $\mathfrak{S}_2M(G)=0$ whenever $G$
has at most two generators. But if $c_2<c_1\leq 2c_2$ or
$c_1=c_2>1$, then the Baer invariant is trivial only if $G$ is
cyclic. This suggests dealing with the two cases separately, and so
we assume first that $c_2<c_1\leq 2c_2$ or $c_1=c_2>1$.

The method we use here implies separating the cases which $G$ is
finite or infinite.

Case one: $G$ is finite abelian group.

\begin{thm}
Let $G\cong \mathbb{Z}_{n_1}\oplus\cdots\oplus \mathbb{Z}_{n_t}$ be
a finite abelian group, where $n_{i+1}\mid n_i$ for $1\leq i \leq
t-1$, then $G$ is $[\mathfrak{N}_{c_1},\mathfrak{N}_{c_2}]$-capable
if and only if $t\geq 2$ and $n_1=n_2$.
\end{thm}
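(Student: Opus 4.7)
The plan is to reduce $[\mathfrak{N}_{c_1},\mathfrak{N}_{c_2}]$-capability to a numerical comparison of Baer invariants. Since $G$ is abelian it satisfies the outer commutator law of $[\mathfrak{N}_{c_1},\mathfrak{N}_{c_2}]$, so $V(G)=1$ and $V^*(G)=G$; Corollary~1.11 together with the finite-case version of Theorem~\ref{inj} then says that $G$ is capable if and only if
$$|[\mathfrak{N}_{c_1},\mathfrak{N}_{c_2}]M(G/\langle x\rangle)|<|[\mathfrak{N}_{c_1},\mathfrak{N}_{c_2}]M(G)|$$
for every non-identity $x\in G$ (the map being surjective with kernel $[SV^*F]/[RV^*F]$, its target size never exceeds its source size).

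From Theorem~2.6 one has $|[\mathfrak{N}_{c_1},\mathfrak{N}_{c_2}]M(G)|=\prod_{i=1}^{t}n_i^{b_i-b_{i-1}}$ with $b_0=0$. The Witt formula gives $\chi_n(1)=0$ for $n\geq 2$, so $b_1=0$, while the standing hypothesis ($c_2<c_1\leq 2c_2$ or $c_1=c_2>1$) ensures that $\chi_{c_j+1}(i)$ is strictly increasing in $i$ for $i\geq 2$, hence so is $b_i$. Consequently $|[\mathfrak{N}_{c_1},\mathfrak{N}_{c_2}]M(G)|=1$ whenever $t=1$, ruling out capability for cyclic $G$. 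For $t\geq 2$, write $G/\langle x\rangle\cong \mathbb{Z}_{m_1}\oplus\cdots\oplus\mathbb{Z}_{m_s}$ in invariant factor form; Pontryagin duality identifies it with the annihilator $\langle x\rangle^{\perp}\leq \widehat{G}\cong G$, so Theorem~3.3 gives $s\leq t$ and $m_i\mid n_i$. Plugging these bounds into the formula,
$$|[\mathfrak{N}_{c_1},\mathfrak{N}_{c_2}]M(G/\langle x\rangle)|=\prod_{i=1}^{s}m_i^{b_i-b_{i-1}}\leq \prod_{i=1}^{t}n_i^{b_i-b_{i-1}}=|[\mathfrak{N}_{c_1},\mathfrak{N}_{c_2}]M(G)|,$$
and the strict positivity of the exponents together with $n_i\geq 2$ forces equality only when $s=t$ and $m_i=n_i$ for all $i\geq 2$.

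In that equality case $|G/\langle x\rangle|<|G|$ yields $m_1<n_1$, while the divisibility $m_2\mid m_1$, namely $n_2\mid m_1$, forces $m_1\geq n_2$; these are simultaneously satisfiable precisely when $n_1>n_2$, and in that regime $x=n_2 e_1\in\mathbb{Z}_{n_1}$ realizes the equality (with $G/\langle x\rangle\cong\mathbb{Z}_{n_2}^{(2)}\oplus\mathbb{Z}_{n_3}\oplus\cdots\oplus\mathbb{Z}_{n_t}$), so $G$ is not capable. When $n_1=n_2$, equality is impossible and $G$ is capable. The main obstacles will be verifying the monotonicity of $b_i$ (which must be checked separately for the two shapes $\chi_{c_1+1}(i)\chi_{c_2+1}(i)$ and $\chi_2(\chi_{c_1+1}(i))$) and justifying $m_i\mid n_i$ for quotients, for which we invoke Pontryagin duality to reduce to Theorem~3.3.
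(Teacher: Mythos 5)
Your proposal is correct and follows essentially the same route as the paper: both reduce capability to comparing $|[\mathfrak{N}_{c_1},\mathfrak{N}_{c_2}]M(G)|$ with $|[\mathfrak{N}_{c_1},\mathfrak{N}_{c_2}]M(G/\langle x\rangle)|$ via Corollary 1.11 and Theorem 1.10, identify quotients with subgroups of $G$ to apply Theorem 3.3, and exploit the fact that $b_1=0$ makes the multiplier blind to $n_1$ (so $x=n_2e_1$ kills capability when $n_1\neq n_2$, while $n_1=n_2$ forces $m_1=n_1$). Your write-up is somewhat more explicit than the paper's about why $b_1=0$ and why the exponents $b_i-b_{i-1}$ are strictly positive for $i\geq 2$ under the standing hypothesis, but the argument is the same.
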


\begin{proof}
We will establish the necessity by contrapositive. If $t=1$ then $G$
and all its quotients are cyclic abelian groups so by Theorem 2.6
$[\mathfrak{N}_{c_1},\mathfrak{N}_{c_2}]M(G/N)=0$ for any normal
subgroup $N$ of $G$, hence by Corollary 1.11 $G$ is not
$[\mathfrak{N}_{c_1},\mathfrak{N}_{c_2}]$-capable. On the other hand
if $n_1\neq n_2$, then let $x=({\bar n_2},{\bar 0},\ldots,{\bar
0})$, since $G/\langle x \rangle\cong \mathbb{Z}_{n_2}\oplus
\mathbb{Z}_{n_2}\cdots\oplus \mathbb{Z}_{n_t}$, Theorem 2.6 shows
the Baer invariants for $G$ and $G/\langle x \rangle$ have the same
size. This shows $G$ is not
$[\mathfrak{N}_{c_1},\mathfrak{N}_{c_2}]$-capable in this case by
Corollary 1.11.

For sufficiency, assume $t\geq 2$ and $n_1=n_2$. By Corollary 1.11
it is enough to show that if $N<G$ and
$[\mathfrak{N}_{c_1},\mathfrak{N}_{c_2}]M(G)\lra
[\mathfrak{N}_{c_1},\mathfrak{N}_{c_2}]M(G/N)$ is injective, then
$N$ is trivial.

In finite abelian groups each quotient is isomorphic to a subgroup
and vice versa. Now let $N<G$, then $G/N$ is isomorphic to a
subgroup of $G$, $H$ say; so by Theorem 3.3 $H\cong
\mathbb{Z}_{m_1}\oplus\cdots\oplus \mathbb{Z}_{m_t}$, where
$m_{i+1}\mid m_i$ for all $1\leq i \leq t-1$ and $m_i\mid n_i$ for
all $1\leq i \leq t$. Computing
$[\mathfrak{N}_{c_1},\mathfrak{N}_{c_2}]M(G)$ and
$[\mathfrak{N}_{c_1},\mathfrak{N}_{c_2}]M(H)$ using Theorem 2.6
shows that
$|[\mathfrak{N}_{c_1},\mathfrak{N}_{c_2}]M(G)|=|[\mathfrak{N}_{c_1},\mathfrak{N}_{c_2}]M(H)|$
if and only if $m_i=n_i$ for all $2\leq i \leq t$, but $n_1=n_2$ by
hypothesis which implies $n_1=m_1$ which is equivalent to $H=G$ and
hence $N=0$. Therefore $G$ is
$[\mathfrak{N}_{c_1},\mathfrak{N}_{c_2}]$-capable.
\end{proof}

Now we consider the infinite case.

\begin{thm}
Let $G\cong \mathbb{Z}^{(k)}\oplus
\mathbb{Z}_{n_1}\oplus\cdots\oplus \mathbb{Z}_{n_t}$ be an infinite
finitely generated abelian group, where $n_{i+1}\mid n_i$ for $1\leq
i \leq t-1$, then $G$ is
$[\mathfrak{N}_{c_1},\mathfrak{N}_{c_2}]$-capable, if and only if
$k\geq 2$.
\end{thm}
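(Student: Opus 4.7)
The plan is to invoke Corollary 1.11: since $G$ is abelian, the verbal subgroup $V(G)=[\gamma_{c_1+1}(G),\gamma_{c_2+1}(G)]$ is trivial and hence $V^*(G)=G$, so $G$ is $[\mathfrak{N}_{c_1},\mathfrak{N}_{c_2}]$-capable if and only if for every nonidentity $x\in G$ the natural homomorphism $[\mathfrak{N}_{c_1},\mathfrak{N}_{c_2}]M(G)\to[\mathfrak{N}_{c_1},\mathfrak{N}_{c_2}]M(G/\langle x\rangle)$ has nontrivial kernel. I would decide each such injectivity question by comparing the explicit structures supplied by Theorem~2.6.

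For necessity I would argue contrapositively. Assuming $k=1$, I would exhibit a single $x$ for which the map is injective: take $x=(n_1,0,\ldots,0)$, whose first coordinate lies in the $\mathbb{Z}$-factor (or simply $x$ a generator of $\mathbb{Z}$ if $t=0$), so that $G/\langle x\rangle$ becomes the finite abelian group with invariant factors $n_1,n_1,n_2,\ldots,n_t$. Using $b_0=b_1=0$ (because $\chi_{c+1}(i)=0$ whenever $c\geq 1$ and $i\leq 1$), a direct substitution into Theorem~2.6 shows that the source and target multipliers are in fact isomorphic. Since both are finite and $V(G)=1$, Theorem~1.10 delivers injectivity and hence noncapability.

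For sufficiency, assume $k\geq 2$ and let $x\in G$ be nonidentity. I split on the order of $x$. If $x$ has infinite order then Lemma~3.1 gives $r_0(G/\langle x\rangle)=k-1$, and Theorem~2.6 then yields free ranks $b_k$ for the source and $b_{k-1}$ for the target; since $\chi_n(d)$ is strictly increasing in $d$ by Witt's formula, $b_k>b_{k-1}$ for $k\geq 2$ both when $c_1>c_2$ and when $c_1=c_2>1$, so no injection of the source into the target is possible. If instead $x$ has finite order, then $x$ lies in the torsion part $T$ of $G$ and both multipliers share the same free rank $b_k$; in this case I would compare the torsion subgroups. Letting $m_1\geq\cdots\geq m_{t'}$ denote the invariant factors of $T/\langle x\rangle$, the positivity of every exponent $b_{k+i}-b_{k+i-1}$ for $k\geq 2$, combined with the strict inequality $|T|>|T/\langle x\rangle|$, forces $|T([\mathfrak{N}_{c_1},\mathfrak{N}_{c_2}]M(G))|>|T([\mathfrak{N}_{c_1},\mathfrak{N}_{c_2}]M(G/\langle x\rangle))|$; since any homomorphism of abelian groups sends torsion to torsion, no injection on the torsion subgroups is possible and the kernel is nontrivial.

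The main obstacle will be the finite-order subcase: to extract the strict inequality above cleanly from Theorem~2.6 I need the quotient analogue of Theorem~3.3, namely that the invariant factors of $T/\langle x\rangle$ divide the corresponding factors of $T$. This can be obtained either by applying Pontryagin duality to Theorem~3.3 (since $\widehat{T/\langle x\rangle}$ embeds in $\widehat{T}\cong T$) or by a direct Smith-normal-form argument; once it is in hand the remaining size comparison is routine.
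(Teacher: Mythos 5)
Your proposal is correct and follows essentially the same route as the paper: necessity via the quotient $\mathbb{Z}_{n_1}\oplus\mathbb{Z}_{n_1}\oplus\cdots\oplus\mathbb{Z}_{n_t}$ when $k=1$, and sufficiency by splitting into the infinite-order case (free rank drops from $b_k$ to $b_{k-1}$) and the finite-order case (compare torsion subgroups of the two multipliers using the positivity of the exponents $b_{k+i}-b_{k+i-1}$). The only substantive difference is that you explicitly flag and justify the quotient analogue of Theorem~3.3 (that the invariant factors of $T/\langle x\rangle$ divide those of $T$), a step the paper uses but leaves implicit.
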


\begin{proof}
We first show that if $k=1$, then there exists a nontrivial element
$x$ of $G$ for which the natural homomorphism
$[\mathfrak{N}_{c_1},\mathfrak{N}_{c_2}]M(G)\longrightarrow
[\mathfrak{N}_{c_1},\mathfrak{N}_{c_2}]M(G/\langle x \rangle)$ is
injective, proving the necessity by contrapositive.

Suppose $k=1$, then $G\cong \mathbb{Z}\oplus
\mathbb{Z}_{n_1}\oplus\cdots\oplus \mathbb{Z}_{n_t}$. Let
$x=(n_1,{\bar 0},\ldots,{\bar 0})$, so $G/\langle x \rangle\cong
\mathbb{Z}_{n_1}\oplus \mathbb{Z}_{n_1}\oplus\cdots\oplus
\mathbb{Z}_{n_t}$. Now by Theorem 2.6 we have
$|[\mathfrak{N}_{c_1},\mathfrak{N}_{c_2}]M(G)|=|[\mathfrak{N}_{c_1},\mathfrak{N}_{c_2}]M(G/\langle
x \rangle)|$, so the result follows.

For sufficiency, assume that $k\geq 2$. It is enough to show that
there is no nontrivial subgroup $N$ of $G$ for which
$[\mathfrak{N}_{c_1},\mathfrak{N}_{c_2}]M(G)\lra
[\mathfrak{N}_{c_1},\mathfrak{N}_{c_2}]M(G/N)$ is injective. If $N$
is an infinite subgroup then $r_0(G/N)<r_0(G)$, so by Theorem 2.6
the torsion free rank of the Baer invariant of $G/N$ is strictly
smaller than that of the invariant for $G$, so no injection is
possible. On the other hand if $N$ is contained in the torsion
subgroup of $G$, so $G/N\cong \mathbb{Z}^{(k)}\oplus
\mathbb{Z}_{m_1}\oplus\cdots\oplus \mathbb{Z}_{m_t}$, where
$m_{i+1}\mid m_i$ and $m_i\mid n_i$ for all $1\leq i \leq t-1$, so
by Theorem 2.5 we have
$[\mathfrak{N}_{c_1},\mathfrak{N}_{c_2}]M(G)\cong
\mathbb{Z}^{(b_k)}\oplus
\mathbb{Z}_{n_1}^{(b_{k+1}-b_k)}\oplus\cdots\oplus
\mathbb{Z}_{n_t}^{(b_{k+t}-b_{k+t-1})}$ and\\
 $[\mathfrak{N}_{c_1},\mathfrak{N}_{c_2}]M(G/N)\cong \mathbb{Z}^{(b_k)}\oplus \mathbb{Z}_{m_1}^{(b_{k+1}-b_k)}\oplus\cdots\oplus \mathbb{Z}_{m_t}^{(b_{k+t}-b_{k+t-1})}$.
It is easy to show that
$$t([\mathfrak{N}_{c_1},\mathfrak{N}_{c_2}]M(G))=\mathbb{Z}_{n_1}^{(b_{k+1}-b_k)}\oplus\cdots\oplus
\mathbb{Z}_{n_t}^{(b_{k+t}-b_{k+t-1})}$$ and
$$t([\mathfrak{N}_{c_1},\mathfrak{N}_{c_2}]M(G/N))=\mathbb{Z}_{m_1}^{(b_{k+1}-b_k)}\oplus\cdots\oplus
\mathbb{Z}_{m_t}^{(b_{k+t}-b_{k+t-1})}.$$ The image of the torsion
subgroup of $[\mathfrak{N}_{c_1},\mathfrak{N}_{c_2}]M(G)$ under the
natural homomorphism must lie in the torsion subgroup of
$[\mathfrak{N}_{c_1},\mathfrak{N}_{c_2}]M(G/N)$, so if the
homomorphism is injective, then we must $t(G)=t(G/N)$;
$t(G/N)=t(G)/N$, this proves that if the map is ingective then
$N=0$, completing the proof.
\end{proof}

\begin{rem}
Let $G\cong \mathbb{Z}^{(k)}\oplus
\mathbb{Z}_{n_1}\oplus\cdots\oplus \mathbb{Z}_{n_t}$ be a finitely
generated abelian group, with $n_{i+1}\mid n_i$ for all $1\leq i
\leq t-1$. Baer's result Baer (1938), implies that $G$ is capable if
and only if $k\geq 2$ or $k=0$, $t\geq 2$ and $n_1=n_2$. Burns and
Ellis (1998), proved that $G$ is $\mathfrak{N}_c$-capable if and
only if it is capable. We now see that this also holds for
$[\mathfrak{N}_{c_1},\mathfrak{N}_{c_2}]$-capability with suitable
conditions on $c_1$ and $c_2$.
\end{rem}


In the case $c_1=c_2=1$ we only state the characterization of the
$\mathfrak S_2$-capable groups among finitely generated abelian
groups. The proofs are simillar to those of Theorems 3.4 and 3.5.
The needed lemmas and their proofs can be restated with necessary
changes similar to Theorems 3.4 and 3.5. Note that in this case the
variety $[\mathfrak{N}_{c_1},\mathfrak{N}_{c_2}]$ is actually the
variety of metabelian groups $\mathfrak{S}_2$.

\begin{thm}
Let $G\cong \mathbb{Z}^{(k)}\oplus
\mathbb{Z}_{n_1}\oplus\cdots\oplus \mathbb{Z}_{n_t}$ be a finitely
generated abelian group, where $n_{i+1}\mid n_i$ for all $1\leq i
\leq t-1$. Then $G$ is $\mathfrak{S}_2$-capable if and only if
$k\geq 3$, or $k=0$, $t\geq 3$, and $n_1=n_2=n_3$.
\end{thm}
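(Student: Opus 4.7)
The plan is to mirror the arguments of Theorems 3.4 and 3.5 with the sequence $b_i=\chi_2(\chi_2(i))$ from Theorem 2.6 (the $c_1=c_2=1$ case). Two numerical facts drive everything: $b_0=b_1=b_2=0$ (so the Baer invariant vanishes on groups of at most two generators), and $b_i$ is strictly increasing for $i\ge 2$. These are exactly what shift the thresholds up by one compared with Theorems 3.4 and 3.5, from $t\ge 2$, $n_1=n_2$, $k\ge 2$ to $t\ge 3$, $n_1=n_2=n_3$, $k\ge 3$.

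For necessity (showing $G$ is not $\mathfrak{S}_2$-capable when the hypotheses fail) I would break into cases via Corollary 1.11 and Theorem 1.10. If $k+t\le 2$ then Theorem 2.6 gives $\mathfrak{S}_2M(G)=0$ and $G$ is trivially not capable. In the finite case ($k=0$, $t\ge 3$) with $n_1=\cdots=n_{s-1}\ne n_s$ for some $s\in\{2,3\}$, take $x=(\bar n_s,\bar 0,\ldots,\bar 0)$; rearranging $G/\langle x\rangle$ into the standard invariant-factor form and applying Theorem 2.6 to both groups shows $|\mathfrak{S}_2M(G)|=|\mathfrak{S}_2M(G/\langle x\rangle)|$, so Theorem 1.10 yields injectivity. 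In the infinite case with $k\in\{1,2\}$ and $t\ge 1$, take $x=(n_1,0,\bar 0,\ldots,\bar 0)$ (placing $n_1$ in the first free coordinate); the quotient acquires one additional cyclic factor of order $n_1$, but because $b_k$ and $b_{k+1}$ vanish for $k\in\{0,1\}$ this extra factor sits in an index range that contributes nothing, so a short calculation with Theorem 2.6 shows the Baer invariants of $G$ and $G/\langle x\rangle$ are isomorphic \emph{finite} groups, and Theorem 1.10 finishes the case.

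For sufficiency in the finite case $k=0$, $t\ge 3$, $n_1=n_2=n_3=n$, let $N<G$ be non-trivial. By Theorem 3.3, $G/N\cong\mathbb{Z}_{m_1}\oplus\cdots\oplus\mathbb{Z}_{m_t}$ with $m_{i+1}\mid m_i$ and $m_i\mid n_i$, and $\prod m_i<\prod n_i$. Using $m_3\le m_2\le m_1\le n$ together with $m_3\mid m_2\mid m_1$, the assumption $m_3=n$ would force $m_1=m_2=m_3=n$, and equality of the torsion parts of the Baer invariants (given by Theorem 2.6) would also force $m_i=n_i$ for $i\ge 4$, i.e.\ $N=0$, contradiction. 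Hence $m_3<n$ and Theorem 2.6 gives a strict drop in the order of the torsion of the Baer invariant, so the natural map has non-trivial kernel. For sufficiency with $k\ge 3$, let $N\le G$ be non-trivial. If $r_0(N)\ge 1$, Lemma 3.1 yields $r_0(G/N)<k$, and the strict monotonicity of $b_i$ for $i\ge 2$ gives a strict drop in the torsion-free rank of the Baer invariant, ruling out injectivity. If $N\subseteq t(G)$, then $r_0(G/N)=k$ and the torsion-part comparison proceeds exactly as in Theorem 3.5, using that $b_{k+i}-b_{k+i-1}>0$ for all $i\ge 1$ when $k\ge 3$.

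The main obstacle is the $k\in\{1,2\}$ sub-case of necessity: the torsion-free rank argument used in Theorem 3.5 fails here because $b_1=b_2=0$, so one must instead produce an explicit element $x$ for which the two Baer invariants are isomorphic finite groups and then invoke Theorem 1.10. This is exactly the phenomenon that forces the threshold $k\ge 3$ rather than $k\ge 2$ (and, in the finite case, $n_1=n_2=n_3$ rather than $n_1=n_2$).
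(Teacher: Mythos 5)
Your proposal is correct and follows exactly the route the paper itself indicates: the paper gives no detailed proof of this theorem, stating only that the arguments of Theorems 3.4 and 3.5 carry over ``with necessary changes,'' and your write-up supplies precisely those changes, correctly isolating the two numerical facts ($b_0=b_1=b_2=0$ and strict growth of $b_i=\chi_2(\chi_2(i))$ for $i\geq 2$) that raise the thresholds from $k\geq 2$, $n_1=n_2$ to $k\geq 3$, $n_1=n_2=n_3$. In particular your treatment of the delicate $k\in\{1,2\}$ subcase --- exhibiting an explicit $x$ with isomorphic finite Baer invariants and invoking Theorem 1.10, since the rank argument of Theorem 3.5 degenerates --- is the right completion of the paper's sketch.
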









\section{The relation between capability and $[\mathfrak{N}_{c_1},\mathfrak{N}_{c_2}]$-capability.}
As before mentioned in the beginning of section 3, capability is one
of the interesting concepts to study. Theorem 3.7 suggests to
consider the relationship between capability and varietal
capability. More precisely we may ask under what conditions
capability implies varietal capability or vice versa? This section
answers the above question in part and show the two concepts does
not coincide in general. Having a review of what has been done,
Burns and Ellis (1998), after introducing the concept of
$c$-capability, showed that every $c+1$-capable group is $c$-capable
group and hence is a capable group, but they construct a $2$-group
which is capable but is not $2$-capable. This example shows that
even in class of $p$-groups the capability does not imply
$c$-capability. However as an interesting fact they proved for
finitely generated abelian groups capability and $c$-capability are
equivalent. Now we concentrate on $\mathfrak{S}_{\ell}$, the variety
of solvable groups of length at most $\ell$ and prove the following
theorem.

\begin{thm}
Let $\mathfrak{S}_{\ell}$ be the variety of solvable groups of
length at most $\ell$. Then every $\mathfrak{S}_{\ell}$-capable
group is $\mathfrak{S}_{\ell-1}$-capable.
\end{thm}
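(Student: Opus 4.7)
The plan is to apply Stroud's theorem (Theorem~1.12) to the identity $\delta_\ell = [\delta_{\ell-1},\delta_{\ell-1}]$, where $\delta_j$ denotes the $j$-th derived word. On disjoint sets of variables this exhibits $\delta_\ell$ as a commutator of two copies of $\delta_{\ell-1}$, which is exactly Stroud's setting with $u = v = \delta_{\ell-1}$. Once Stroud is in hand, the argument is essentially an application of the third isomorphism theorem.

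First I would invoke $\mathfrak{S}_\ell$-capability to obtain a group $E$ with $G \cong E/\mathfrak{S}_\ell^*(E)$. Set $C := C_E(E^{(\ell-1)})$, and note that $u(E) = v(E) = \delta_{\ell-1}(E) = E^{(\ell-1)}$, so in Stroud's notation $A = B = C$; by the symmetry $u = v$, the subgroups $L$ and $M$ coincide. Theorem~1.12(ii) then yields $\mathfrak{S}_\ell^*(E) = L$ with
$$L/C \;=\; \mathfrak{S}_{\ell-1}^*(E/C),$$
and in particular $C \le \mathfrak{S}_\ell^*(E)$.

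Finally I would take $E' := E/C$ as the candidate witness to $\mathfrak{S}_{\ell-1}$-capability. The third isomorphism theorem gives
$$E'\big/\mathfrak{S}_{\ell-1}^*(E') \;=\; (E/C)\big/(\mathfrak{S}_\ell^*(E)/C) \;\cong\; E/\mathfrak{S}_\ell^*(E) \;\cong\; G,$$
so $G$ is $\mathfrak{S}_{\ell-1}$-capable, as desired.

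The main obstacle is purely of a bookkeeping nature: one must verify that the factorisation $\delta_\ell = [\delta_{\ell-1},\delta_{\ell-1}]$ on disjoint variable sets is an admissible instance of Stroud's hypothesis, and one must observe that the union $L \cup M$ appearing in Stroud's conclusion is automatically a subgroup here because the symmetry $u = v$ forces $L = M$. Once these two points are granted, no further commutator calculus is required and the proof follows in a single diagram chase.
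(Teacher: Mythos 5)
Your proposal is correct and follows essentially the same route as the paper: the paper's one-line proof also applies Stroud's theorem (Theorem~1.12) to $\delta_\ell=[\delta_{\ell-1},\delta_{\ell-1}]$ to identify $\mathfrak{S}_\ell^*(E)/C_E(E^{(\ell-1)})$ with $\mathfrak{S}_{\ell-1}^*\bigl(E/C_E(E^{(\ell-1)})\bigr)$ and then concludes by the third isomorphism theorem. Your version just makes explicit the bookkeeping (that $u=v$ forces $A=B$ and $L=M$, and the choice of witness $E'=E/C$) that the paper leaves to the reader.
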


\begin{proof}
Using Theorem 1.12 we have
$$\frac{{\mathfrak{S}_{\ell}}^*(G)}{C_G(\mathfrak{S}_{\ell-1}(G))}=\frac{G}{C_G(\mathfrak{S}_{\ell-1}(G))}.$$
Now the result follows immediately.
\end{proof}

An immediate consequence of the above theorem is that every
$\mathfrak{S}_{\ell}$-capable group is capable.

Comparing with $c$-capability, there is no difference in results.
The next theorem shows the converse of Theorem 4.1 is not true in
general, just the same as the result of Burns and Ellis. But the
difference is that here the counter example is in the class of
finitely generated abelian groups, exactly where the notions of
capability and $c$-capability coincide.

\begin{thm}
Let $n$ be a natural number, then the group $\mathbf{Z}_n\oplus
\mathbf{Z}_n$ is capable but it is not $\mathfrak{S}_2$-capable
\end{thm}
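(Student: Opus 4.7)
The plan is to invoke the two classification theorems already available in the paper and simply read off the conclusion for the specific group $\mathbb{Z}_n \oplus \mathbb{Z}_n$. There is essentially no computation involved beyond matching parameters; the substance of the theorem lies in the contrast between the two classifications.

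First I would establish capability. By Baer's classical classification, recalled in Remark~3.6, a finitely generated abelian group $\mathbb{Z}^{(k)} \oplus \mathbb{Z}_{n_1} \oplus \cdots \oplus \mathbb{Z}_{n_t}$ (with $n_{i+1} \mid n_i$) is capable if and only if either $k \geq 2$, or $k = 0$, $t \geq 2$, and $n_1 = n_2$. Writing $G = \mathbb{Z}_n \oplus \mathbb{Z}_n$ in the normal form of the structure theorem we have $k = 0$, $t = 2$, and $n_1 = n_2 = n$, so the second clause is satisfied and $G$ is capable. (Here one should tacitly take $n \geq 2$, as otherwise the group is trivial and the structure-theorem form $k=0, t=2$ is not the standard one.)

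Next I would establish failure of $\mathfrak{S}_2$-capability by applying Theorem~3.7, which asserts that such a group is $\mathfrak{S}_2$-capable if and only if $k \geq 3$, or $k = 0$, $t \geq 3$, and $n_1 = n_2 = n_3$. For $G = \mathbb{Z}_n \oplus \mathbb{Z}_n$ we have $k = 0 < 3$ and $t = 2 < 3$, so neither alternative holds, and therefore $G$ is not $\mathfrak{S}_2$-capable.

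Since both claims follow by direct inspection from the classifications, there is no real obstacle. The only substantive remark worth making is the conceptual one the authors are highlighting: the capability criterion requires two cyclic summands of maximal order, whereas the $\mathfrak{S}_2$-capability criterion requires three; hence the intermediate case $(k,t,n_1,n_2) = (0,2,n,n)$ distinguishes the two notions and yields the desired counterexample. This is what I would emphasize in the write-up, after which the proof itself is a one-line reduction to Baer's theorem and Theorem~3.7.
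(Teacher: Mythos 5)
Your proposal is correct and is exactly the argument the paper intends: the paper states this theorem without proof, since it follows immediately by reading off the case $(k,t,n_1,n_2)=(0,2,n,n)$ from Baer's classification in Remark~3.6 (capable) and from Theorem~3.7 (not $\mathfrak{S}_2$-capable, as $t=2<3$). Your write-up, including the caveat that $n\geq 2$ is tacitly assumed, matches this one-line reduction.
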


Finally we consider
$[\mathfrak{N}_{c_1},\mathfrak{N}_{c_2}]$-capability. This can be
considered as an special case of
$[\mathfrak{V},\mathfrak{W}]$-capability and one may suggest dealing
with the relation between $\mathfrak{V}$-capability,
$\mathfrak{W}$-capability and
$[\mathfrak{V},\mathfrak{W}]$-capability. Here, there can not be
explained more about that situation except the following theorem
which has a proof similar to that of Theorem 4.1.

\begin{thm}
Let $\mathfrak{V}$ be any variety then every
$[\mathfrak{V},\mathfrak{V}]$-capable group is
$\mathfrak{V}$-capable group.
\end{thm}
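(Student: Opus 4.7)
The plan is to run the same argument as in Theorem 4.1, but with the defining word of $\mathfrak{V}$ playing the role that $\mathfrak{S}_{\ell-1}$ did there. Let $v$ denote the defining word (or set of words) of $\mathfrak{V}$, and realise $[\mathfrak{V},\mathfrak{V}]$ as the variety with word $w = [v',v'']$, where $v'$ and $v''$ are copies of $v$ in disjoint sets of variables. Theorem 1.12 then applies with these two words; since $v'(G)=v''(G)=v(G)$ for every group $G$, the subgroups $A$ and $B$ in the statement of that theorem both equal $C_G(v(G))$, which forces $L=M$, and substituting gives
$$\frac{[\mathfrak{V},\mathfrak{V}]^*(G)}{C_G(v(G))} \;=\; v^*\bigl(G/C_G(v(G))\bigr).$$

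With this identity in hand I would assume $G$ is $[\mathfrak{V},\mathfrak{V}]$-capable and fix $E$ with $G \cong E/[\mathfrak{V},\mathfrak{V}]^*(E)$. Setting $E' = E/C_E(v(E))$ and applying the identity above with $E$ in place of $G$, the marginal $v^*(E')$ is exactly $[\mathfrak{V},\mathfrak{V}]^*(E)/C_E(v(E))$, so by the third isomorphism theorem
$$E'/v^*(E') \;\cong\; E/[\mathfrak{V},\mathfrak{V}]^*(E) \;\cong\; G,$$
which exhibits $G$ as the marginal quotient of $E'$; hence $G$ is $\mathfrak{V}$-capable.

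There is no serious obstacle. The only point requiring a moment's care is the hypothesis of Theorem 1.12 that the two words be in independent variables, which is automatic from the definition of the outer commutator of $\mathfrak{V}$ with itself; this is precisely what collapses the general formula $w^*(G)=L\cup M$ into the single coset-wise equality displayed above. Every subsequent manipulation is standard, and the final identification is just the third isomorphism theorem applied to $C_E(v(E)) \leq [\mathfrak{V},\mathfrak{V}]^*(E) \leq E$.
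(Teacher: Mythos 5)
Your proof is correct and is essentially the argument the paper intends: it explicitly states that Theorem 4.3 "has a proof similar to that of Theorem 4.1," namely applying Stroud's theorem (Theorem 1.12) to $w=[v',v'']$ with disjoint variable sets, noting $A=B=C_G(v(G))$ so that $L=M$ and $w^*(G)/C_G(v(G))=v^*\bigl(G/C_G(v(G))\bigr)$, and then passing from a witness $E$ for $[\mathfrak{V},\mathfrak{V}]$-capability to $E/C_E(v(E))$ via the third isomorphism theorem. Your write-up simply carries out in detail the steps the paper leaves implicit.
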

As before stated the converse of the above theorem is not true in
general, but in class of finitely generated abelian groups we have
the following theorem.

\begin{thm}
Let $G\cong \mathbb{Z}^{(k)}\oplus
\mathbb{Z}_{n_1}\oplus\cdots\oplus \mathbb{Z}_{n_t}$ be a finitely
generated abelian group, where $n_{i+1}\mid n_i$ for all $1\leq i
\leq t-1$. Then the following are equivalent:

\hspace{-.75cm}
$(i) \ G$ is capable.\\
$(ii) \ G$ is $\mathfrak{N}_c$-capable for some $c\geq 1$.\\
$(iii) \ G$ is $\mathfrak{N}_c$-capable for all $c\geq 1$.\\
$(iv) \ G$ is $[\mathfrak{N}_{c_1},\mathfrak{N}_{c_2}]$-capable for
all $c_1$, $c_2$ with $c_2<c_1\leq 2c_2$ or $c_1=c_2>1$.\\
$(v) \ k\geq 2$, or $k=0$, $t\geq 2$, and $n_1=n_2$.\\
\end{thm}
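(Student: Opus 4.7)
The plan is to establish the five equivalences by using results already in the paper rather than reproving anything from scratch, since each piece is essentially on record. I would organize the proof as the circular chain $(v) \Rightarrow (iii) \Rightarrow (ii) \Rightarrow (i) \Rightarrow (v)$ together with a separate equivalence $(v) \Leftrightarrow (iv)$, since (iv) is the genuinely new ingredient of this paper while the rest is classical or quoted.

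For $(i) \Leftrightarrow (v)$, I would invoke Baer's 1938 classification of capable finitely generated abelian groups, exactly as recalled in Remark 3.6. For $(i) \Leftrightarrow (ii) \Leftrightarrow (iii)$, I would cite the Burns--Ellis theorem (1998), which shows that for finitely generated abelian groups $\mathfrak{N}_c$-capability coincides with ordinary capability for every $c \geq 1$; this instantly gives the equivalence of (i), (ii), and (iii). The trivial implication $(iii) \Rightarrow (ii)$ requires no argument.

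The only step that really uses the machinery of this paper is $(iv) \Leftrightarrow (v)$, and here I would split according to whether $G$ is finite or infinite. If $k = 0$, Theorem 3.4 gives that $G$ is $[\mathfrak{N}_{c_1},\mathfrak{N}_{c_2}]$-capable if and only if $t \geq 2$ and $n_1 = n_2$, for every admissible pair $(c_1,c_2)$. If $k \geq 1$, Theorem 3.5 gives that $G$ is $[\mathfrak{N}_{c_1},\mathfrak{N}_{c_2}]$-capable if and only if $k \geq 2$, again for every admissible $(c_1,c_2)$. Combining these two cases, (iv) is equivalent to ``$k \geq 2$, or $k = 0$, $t \geq 2$, and $n_1 = n_2$,'' which is precisely condition (v). Since the conclusions of Theorems 3.4 and 3.5 are uniform in the pair $(c_1,c_2)$ (subject to $c_2 < c_1 \leq 2c_2$ or $c_1 = c_2 > 1$), ``for all such $(c_1,c_2)$'' and ``for some such $(c_1,c_2)$'' in fact coincide with (v), so the condition as stated in (iv) is equivalent to (v).

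I do not expect a genuine obstacle here: the theorem is a summarizing statement whose content has been absorbed piece by piece in Sections 2 and 3 and in the cited literature. The only thing one must be careful about is that the equivalence in (iv) is asserted uniformly for all admissible $(c_1,c_2)$, so one must note that Theorems 3.4 and 3.5 give the same characterization regardless of which admissible pair is chosen, so that ``for all'' in (iv) is not a stronger condition than ``for some.'' Once this is pointed out, the proof reduces to citing the relevant results and observing that conditions (i), (iii), (iv), and (v) all translate into the same arithmetic condition on $k$, $t$, and the $n_i$.
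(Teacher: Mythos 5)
Your proposal is correct and is exactly the argument the paper intends: the paper in fact states this theorem with no written proof, treating it as an immediate assembly of Baer's classification and the Burns--Ellis result (both recalled in Remark 3.6) together with Theorems 3.4 and 3.5 for the $[\mathfrak{N}_{c_1},\mathfrak{N}_{c_2}]$ case. Your additional observation that the characterizations in Theorems 3.4 and 3.5 are uniform in the admissible pair $(c_1,c_2)$, so that ``for all'' in $(iv)$ collapses to the single arithmetic condition $(v)$, is the one point worth making explicit, and you have made it.
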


\end{document}